\documentclass[a4paper,12pt]{article}
\usepackage{hyperref}
\hypersetup{
    colorlinks=true,
    linktoc=all,
    linkcolor=red,     %choose some color if you want links to stand out
    citecolor=blue,     %choose some color if you want links to stand out
}
\usepackage{graphicx}
\usepackage{amsmath,amssymb,amsfonts,amsthm,graphics,enumitem}
\usepackage{latexsym, bm}
\usepackage{multicol}
\usepackage{indentfirst}

\usepackage{setspace}
\newtheorem{theorem}{Theorem}[section]
\newtheorem{lemma}{Lemma}[section]

\newtheorem{corollary}{Corollary}[section]

\newtheorem{problem}{Problem}[section]

\newtheorem{conjecture}{Conjecture}[section]

\newtheorem{observation}{Observation}[section]
\newcommand{\ignore}[1]{}
\newtheorem{claimnum}{Claim}
\begin{document}
\noindent
\begin{spacing}{1}

\title {Edge and spectral conditions for rainbow pancyclicity in graph collections}
\date{}

\author{
Lihua You\footnote{School of Mathematical Sciences, South China Normal University, Guangzhou, 510631, P. R. China.
E-mail: {\tt ylhua@scnu.edu.cn}.} \;\;   \;\; Xiaoxue Zhang\footnote{Corresponding author. School of Mathematical Sciences, South China Normal University, Guangzhou, 510631, P. R. China.
E-mail: {\tt zhang\_xx1209@163.com}.}\;\;   \;\; Xinghui Zhao\footnote{School of Mathematical Sciences, South China Normal University, Guangzhou, 510631, P. R. China.
E-mail: {\tt xhzhao@m.scnu.edu.cn}.}
}
\maketitle
\begin{abstract}
Let $\mathbf{G}=\{G_1,\dots,G_{n}\}$ be a collection of not necessarily distinct $n$-vertex graphs with a common vertex set $V$. A cycle $C$ with $V(C)\subseteq V$ and $|E(C)|\leq n$ is called \emph{rainbow} in $\mathbf{G}$, if there exists an injection $\phi\colon E(C)\to [n]$ such that $e\in E(G_{\phi(e)})$ for each $e\in E(C)$.
The graph collection $\mathbf{G}$ is said to be \emph{rainbow pancyclic} if it contains a rainbow cycle of every length from 3 to $n$.
In this paper, we show that if $e(G_i)\ge \binom{n-1}{2}+1$ for each $i\in[n]$ with $n\ge 3$, then $\mathbf{G}$ is rainbow pancyclic, apart from three explicitly described exceptional graph collections. This answers Problem $1$ of [Discrete Math., \textbf {348}(2025), 114600] and strengthens the result from rainbow Hamiltonicity to rainbow pancyclicity. As a consequence, we obtain that if $\rho(G_i)>n-2$ for each $i\in[n]$, then $\mathbf{G}$ is rainbow pancyclic unless $G_1=G_2=\dots=G_n\cong K_1\vee(K_{n-2}\cup K_1)$, which improves Theorem $5$ of [Discrete Math., \textbf {348}(2025), 114600]. We also characterize all graph collections that are not rainbow pancyclic under the condition $\rho(G_i)\ge n-2$ for each $i\in[n]$.

\noindent
{\bf Keywords:} \ Rainbow cycle; Rainbow pancyclicity; Graph collection; Spectral radius

\noindent
{\bf MSC:} 05C38, 05C50
\end{abstract}

\section{Introduction}

\subsection{Hamiltonicity and pancyclicity}
A graph is Hamiltonian if it contains a cycle covering all its vertices exactly once. It is natural to ask what conditions guarantee Hamiltonicity for a given graph. The study of Hamiltonicity has a long history and has attracted the interest of many researchers. A number of results have been established. A classical result by Dirac \cite{Dirac} states that every $n$-vertex graph with minimum degree at least $\frac{n}{2}$ is Hamiltonian. In 1961, Ore \cite{Ore} proved a sufficient condition on the number of edges for a graph to be Hamiltonian. Furthermore, Ore \cite{Ore} and Bondy \cite{Bondy1972} characterized the exceptional non-Hamiltonian graphs with exactly $\binom{n-1}{2}+1$ edges, respectively.

\begin{theorem}[\upshape\cite{Ore}]\label{thm1-2}
A graph with $e(G)> \binom{n-1}{2}+1$ is Hamiltonian.
\end{theorem}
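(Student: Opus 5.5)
We derive this from Ore's degree-sum theorem: if $d(u)+d(v)\ge n$ for every pair of nonadjacent vertices $u,v$, then $G$ is Hamiltonian. That theorem itself comes from the standard longest-path/rotation argument, which we take as known. Throughout, $G$ has $n\ge 3$ vertices and $e(G)\ge\binom{n-1}{2}+2$.

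Suppose $u,v$ are nonadjacent. We count the edges of $G$ that miss both $u$ and $v$. They lie inside $V\setminus\{u,v\}$, so there are at most $\binom{n-2}{2}$ of them. Since $uv\notin E(G)$, every other edge meets $u$ or $v$ but not both, so the number of edges meeting $\{u,v\}$ is exactly $d(u)+d(v)$. Hence
\[
d(u)+d(v)\ \ge\ e(G)-\binom{n-2}{2}\ \ge\ \binom{n-1}{2}+2-\binom{n-2}{2}\ =\ (n-2)+2\ =\ n .
\]
Thus every nonadjacent pair has degree sum at least $n$, and Ore's condition gives a Hamiltonian cycle. If $G$ is complete there are no nonadjacent pairs and the conclusion is trivial.

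For small $n$, note $\binom{n-1}{2}+2\le\binom n2$ requires $n\ge 3$. For $n=3$ the hypothesis forces $K_3$, which is Hamiltonian, so no separate case is needed.

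For self-containment one can also prove Ore's theorem directly. Take a counterexample $G$ that is edge-maximal non-Hamiltonian among graphs satisfying the degree-sum condition; the condition is preserved under adding edges. Then adding any missing edge $uv$ creates a Hamiltonian cycle, so $G$ has a Hamiltonian path $u=x_1,\dots,x_n=v$. Let $S=\{i: ux_{i+1}\in E\}$ and $T=\{i: x_iv\in E\}$, both subsets of $\{1,\dots,n-1\}$. Then $|S|+|T|\ge n$, so some $i$ lies in both sets. This yields the Hamiltonian cycle $x_1x_{i+1}\cdots x_nx_i\cdots x_1$, a contradiction.

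The only real work is the counting inequality above. It is tight: $K_1\vee(K_{n-2}\cup K_1)$ has $\binom{n-1}{2}+1$ edges and is non-Hamiltonian.
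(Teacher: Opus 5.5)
Your proof is correct. Since $uv\notin E(G)$, the edges meeting $\{u,v\}$ number exactly $d(u)+d(v)$, which gives $d(u)+d(v)\ge e(G)-\binom{n-2}{2}\ge n$. Your closure proof of Ore's degree-sum theorem is also sound: any $i\in S\cap T$ automatically satisfies $2\le i\le n-2$ because $uv\notin E(G)$, so the rotated cycle is a genuine Hamiltonian cycle.

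The paper gives no proof of this statement. It simply cites Ore and uses the result as a black box in the proof of Theorem~\ref{thm1-4}. Your derivation through the degree-sum condition is the standard classical one, so there is no competing argument to compare against.
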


\begin{theorem}[\upshape\cite{Bondy1972,Ore}]\label{thm1-3}
Let $G$ be a non-Hamiltonian graph of order $n$ with $e(G)=\binom{n-1}{2}+1$. Then either $G\cong K_1\vee(K_{n-2}\cup K_1)$ or $G\cong K_2\vee 3K_1$.
\end{theorem}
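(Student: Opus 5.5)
The plan is a closure argument in the style of Bondy--Chv\'atal combined with a degree-sequence count. For $n\le 4$ the statement is a finite check: for $n=3$ one edge is missing from $K_3$ and the graph is $K_1\vee(K_1\cup K_1)$, and for $n=4$ the graph is $K_4$ minus two edges, which is handled directly. So assume $n\ge 5$ and let $G$ be non-Hamiltonian with $e(G)=\binom{n-1}{2}+1$, which is exactly $n-2$ edges fewer than $K_n$. Write the degrees as $d_1\le d_2\le\dots\le d_n$.

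\textbf{Step 1.} We have $\sum_i d_i = (n-1)(n-2)+2$. By Chv\'atal's degree-sequence criterion, non-Hamiltonicity gives some $k<n/2$ with $d_k\le k$ and $d_{n-k}\le n-k-1$. Then
\[
\sum_i d_i \le k\cdot k + (n-2k)(n-k-1) + k(n-1).
\]
Call the right-hand side $f(k)$. It is convex in $k$. Comparing with $(n-1)(n-2)+2$, I expect the inequality to fail for all $2\le k<n/2$ except $k=2$ with $n=5$. This leaves two cases.

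\textbf{Step 2: the case $k=1$.} Here $d_1\le 1$. A vertex of degree $0$ would leave at most $\binom{n-1}{2}$ edges, so $d_1=1$. The remaining $n-1$ vertices then span $\binom{n-1}{2}$ edges, hence form $K_{n-1}$. Therefore $G\cong K_1\vee(K_{n-2}\cup K_1)$.

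\textbf{Step 3: the case $k=2$, $n=5$.} Here $e(G)=7$ and $d_1,d_2\le 2$, so equality must hold throughout. Then $d_1=d_2=2$ and $d_3=4$ (from $d_{n-k}\le n-k-1=2$ this would need care). Rather than push the inequalities, I will do a direct small check: if $G$ has two nonadjacent vertices whose degree sum is at least $n$, add that edge (Bondy--Chv\'atal closure) and use the fact that the resulting graph, with more edges, is Hamiltonian by Theorem \ref{thm1-2}. So for every nonadjacent pair $u,v$ we have $d(u)+d(v)\le 4$.

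With $7$ edges on $5$ vertices, the complement has $3$ edges. Every complement edge $uv$ satisfies $d(u)+d(v)\le 4$, that is, $\bar d(u)+\bar d(v)\ge 4$. A $3$-edge graph in which every edge has endpoint-degree sum at least $4$ must be a star $K_{1,3}$ or a triangle. A path $P_4$ has end edges with sum $3$. A $P_3\cup K_2$ has the $K_2$ edge with sum $2$.
\begin{itemize}[label={}]
\item The triangle complement gives $K_2\vee 3K_1$.
\item The star complement gives $K_1\vee(K_3\cup K_1)$, which is the first family for $n=5$.
\end{itemize}

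\textbf{Main obstacle.} The hardest step is the inequality analysis in Step 1, namely showing that $f(k)<(n-1)(n-2)+2$ for $2\le k<n/2$ when $n\ge 6$. By convexity it suffices to check the endpoints $k=2$ and $k\approx (n-1)/2$. At $k=2$,
\[
f(2)=4+(n-4)(n-3)+2(n-1)=n^2-5n+14,
\]
while $(n-1)(n-2)+2=n^2-3n+4$. So $f(2)<n^2-3n+4$ iff $2n>10$, that is, $n>5$, which is consistent with the exception at $n=5$. The other endpoint should be an even cruder bound. Finally, one has to verify that both exceptional graphs are indeed non-Hamiltonian. For the first, the degree-$1$ vertex blocks any Hamiltonian cycle. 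For $K_2\vee 3K_1$, the three independent vertices would need three distinct connectors in the cycle but only two are available.
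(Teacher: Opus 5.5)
The paper does not prove this statement; it is quoted from Ore and Bondy, so there is no in-paper argument to compare against. Your proof is correct and follows the standard textbook route via Chv\'atal's criterion. Your $f(k)$ is exactly the degree sum of the maximal non-Hamiltonian graph $K_k\vee(\overline{K_k}\cup K_{n-2k})$, and the two surviving cases $k=1$ and $(k,n)=(2,5)$ give exactly the two exceptional graphs.

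A few loose ends are worth tidying.

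\textbf{The unchecked endpoint.} You leave the other endpoint as ``should be an even cruder bound.'' No convexity argument is needed, because
\[
f(k)-\bigl[(n-1)(n-2)+2\bigr]=3k^2-(2n-1)k+2n-4=(k-1)(3k-2n+4).
\]
For $2\le k\le (n-1)/2$ the second factor is at most $(5-n)/2$. So the difference is negative for all $n\ge 6$ and vanishes only at $(k,n)=(2,5)$.

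\textbf{The slip in Step~3.} The claim $d_3=4$ is wrong: Chv\'atal's condition gives $d_3=d_{n-k}\le 2$. Equality in $\sum_i d_i\le f(2)=14$ then forces the degree sequence $(2,2,2,4,4)$, which is $K_2\vee 3K_1$ immediately. Your closure-plus-complement check for $n=5$ is also valid, and applying Theorem~\ref{thm1-2} to $G+uv$ is legitimate; it is just longer than needed.

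\textbf{Unnecessary parts.} You do not need to treat $n\le 4$ separately, since then only $k=1$ is possible and Step~2 applies. The closing check that the two exceptional graphs are non-Hamiltonian is also not required for the stated direction.
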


In 2010, Fiedler and Nikiforov \cite{Fiedler} provided a spectral condition guaranteeing the Hamiltonicity of graphs. This result follows readily by applying Stanley's inequality $\rho^2(G)+\rho(G)\le 2e(G)$ together with Theorems \ref{thm1-2} and \ref{thm1-3}.

\begin{theorem}[\upshape\cite{Fiedler}]\label{pro1-1}
Let $G$ be an $n$-vertex graph and $\rho(G)> n-2$.
Then $G$ is Hamiltonian unless $G\cong K_1\vee(K_{n-2}\cup K_1)$.
\end{theorem}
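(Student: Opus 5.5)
The plan is the route the paper itself indicates: convert the spectral hypothesis into an edge-count hypothesis via Stanley's inequality $\rho^2(G)+\rho(G)\le 2e(G)$, then apply Theorems \ref{thm1-2} and \ref{thm1-3}. Let $G$ be an $n$-vertex graph with $\rho(G)>n-2$. The function $x\mapsto x^2+x$ is increasing for $x>0$, so Stanley's inequality gives
\[
2e(G)\ge \rho^2(G)+\rho(G) > (n-2)^2+(n-2)=(n-2)(n-1),
\]
that is, $e(G)>\binom{n-1}{2}$. Since $e(G)$ is an integer, $e(G)\ge \binom{n-1}{2}+1$.

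There are now two cases. If $e(G)>\binom{n-1}{2}+1$, then $G$ is Hamiltonian by Theorem \ref{thm1-2}. Otherwise $e(G)=\binom{n-1}{2}+1$. If $G$ is non-Hamiltonian, Theorem \ref{thm1-3} says $G\cong K_1\vee(K_{n-2}\cup K_1)$ or $G\cong K_2\vee 3K_1$. So it remains to exclude $K_2\vee 3K_1$, a $5$-vertex graph (so $n=5$), by showing that its spectral radius is at most $n-2=3$.

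This spectral computation is the only real work, and I would do it by equitable partition. Split $K_2\vee 3K_1$ into the two dominating vertices and the three independent vertices. The quotient matrix is
\[
\begin{pmatrix}1&3\\2&0\end{pmatrix},
\]
with characteristic polynomial $x^2-x-6=(x-3)(x+2)$. Hence $\rho(K_2\vee 3K_1)=3=n-2$, which is not strictly greater than $n-2$, so this graph is excluded by hypothesis. The only remaining non-Hamiltonian possibility is $G\cong K_1\vee(K_{n-2}\cup K_1)$. Its spectral radius is indeed larger than $\rho(K_{n-1})=n-2$, since it properly contains $K_{n-1}$ and is connected, so it genuinely occurs as an exception.

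Finally, for small $n$ the edge threshold and Theorem \ref{thm1-3} should be checked directly. For example, when $n\le 2$ the hypothesis is vacuous or trivial, and I expect those cases to need a brief separate remark rather than being covered by the general argument.
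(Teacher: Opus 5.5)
Your proposal is correct and follows the route the paper indicates, which it carries out explicitly in the proof of Theorem \ref{thm1-4}(ii): Stanley's inequality gives $e(G)\ge\binom{n-1}{2}+1$, Theorems \ref{thm1-2} and \ref{thm1-3} leave $K_1\vee(K_{n-2}\cup K_1)$ and $K_2\vee 3K_1$ as the only non-Hamiltonian candidates, and $K_2\vee 3K_1$ is excluded because $\rho(K_2\vee 3K_1)=3=n-2$. Your equitable-partition computation supplies the detail the paper calls ``easy to obtain,'' and your remark on degenerate $n\le 2$ is harmless, since the paper does not treat those cases either.
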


A natural and much stronger notion than Hamiltonicity is that of pancyclicity.
An $n$-vertex graph with $n\ge 3$ is said to be pancyclic if it contains a cycle of every length from 3 to $n$.
In 1971, Bondy \cite{BondyJCTB} strengthened Dirac's result by showing that the same minimum degree condition ensures pancyclicity except for $K_{\frac{n}{2},\frac{n}{2}}$. In fact, he established a stronger result.

\begin{theorem}[\upshape\cite{BondyJCTB}]\label{thm1-1}
Let $G$ be a Hamiltonian graph of order $n$ with $e(G)\ge \frac{n^2}{4}$. Then $G$ is either pancyclic or $G\cong K_{\frac{n}{2},\frac{n}{2}}$.
\end{theorem}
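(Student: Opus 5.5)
The plan is the classical argument of Bondy: show that a Hamiltonian cycle with many edges has many chords, and use a counting lemma on chords to produce a cycle of each length. Let $C=v_0v_1\cdots v_{n-1}v_0$ be a Hamiltonian cycle of $G$, with indices taken modulo $n$. I would argue by induction on $n$, proving the slightly stronger statement that $G$ contains a cycle of length $n-1$ unless $G\cong K_{\frac n2,\frac n2}$. Once we have an $(n-1)$-cycle $C'$, we delete the vertex $v$ missed by $C'$. If $d(v)\le \frac n2$, then
\[
e(G-v)\ \ge\ \frac{n^2}{4}-\frac n2\ >\ \frac{(n-1)^2}{4}-\tfrac14,
\]
so by integrality $e(G-v)\ge \frac{(n-1)^2}{4}$. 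The induction hypothesis then applies to the Hamiltonian graph $G-v$, which is Hamiltonian via $C'$, and yields all lengths from $3$ to $n-1$. The exceptional case $G-v\cong K_{\frac{n-1}2,\frac{n-1}2}$ cannot occur, since $n-1$ would then be even, and these dimensions are handled by the parity/integer check above. If every vertex outside some $(n-1)$-cycle has large degree, we choose among the vertices that can be omitted by an $(n-1)$-cycle one of minimum degree. An averaging argument shows that some choice has $d(v)\le \frac n2$, or else $G$ is so dense that pancyclicity follows directly. I expect this bookkeeping to need care but to be routine.

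The core step is producing the $(n-1)$-cycle. Suppose $G$ has no cycle of length $n-1$. For each $i$, the pairs of chords $v_iv_j$ and $v_{i+1}v_{j+2}$ cannot both be present, because together they would give the cycle $v_iv_j v_{j-1}\cdots v_{i+1}v_{j+2}v_{j+3}\cdots v_i$ of length $n-1$. Similarly, $v_iv_{i+2}$ cannot be present, since it skips one vertex. For a fixed pair of consecutive vertices $v_i,v_{i+1}$, this forbids, for each $j$, at least one of the two possible adjacencies $v_i\sim v_j$ and $v_{i+1}\sim v_{j+2}$. This gives
\[
d(v_i)+d(v_{i+1})\ \le\ n .
\]
Summing over all $i$ yields $4e(G)=\sum_i\bigl(d(v_i)+d(v_{i+1})\bigr)\le n^2$, so $e(G)\le \frac{n^2}{4}$. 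Combined with the hypothesis, equality holds everywhere.

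The equality case is the main obstacle. Equality forces $d(v_i)+d(v_{i+1})=n$ for every $i$, and forces exactly one of each forbidden pair to be present. From this I would show that $G$ contains no edge $v_iv_{i+2k}$, that is, no odd cycles arise through even-gap chords, and that all odd-gap chords are present. Concretely, the first check is that a chord $v_iv_j$ with $j-i$ even and greater than $2$, together with the forced structure, would create an $(n-1)$-cycle. The conclusion is then that $n$ is even and $G$ is the complete bipartite graph on the even- and odd-indexed vertices, namely $K_{\frac n2,\frac n2}$, which is indeed not pancyclic. Carrying out this rigidity analysis cleanly, and handling small $n$ by direct inspection, is where most of the effort goes.
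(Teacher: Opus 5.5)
The paper does not prove this statement; it only quotes it from Bondy, so I am judging your proposal on its own. Your route is the classical one: an $(n-1)$-cycle lemma from the forbidden chord pairs $v_iv_j$, $v_{i+1}v_{j+2}$, then induction on $n$.

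\textbf{The first half is sound.} The bound $d(v_i)+d(v_{i+1})\le n$ holds; the boundary values $j=i+1$ and $j=i-2$ are exactly the excluded chords $v_kv_{k+2}$. The equality case is also easier than you expect. Equality says that for all $i,j$ exactly one of $v_iv_j$ and $v_{i+1}v_{j+2}$ is an edge, reading $v_iv_i$ as a non-edge. Applying this to the pair $(j,i)$ as well gives $v_{i+1}\sim v_{j+2}$ iff $v_{i+2}\sim v_{j+1}$. So adjacency of $v_x,v_y$ depends only on $x+y \bmod n$. Then $v_x\not\sim v_x$ and $v_x\sim v_{x+1}$ force $n$ even and $G\cong K_{\frac n2,\frac n2}$, with the even- and odd-indexed vertices as the two parts.

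\textbf{The genuine gap is the induction step.} Your strict inequality is false: $\frac{n^2}{4}-\frac n2=\frac{(n-1)^2}{4}-\frac14$ exactly. For even $n=2k$, integrality does not rescue it: with $e(G)=k^2$ and $d(v)=k$ you get $e(G-v)=k^2-k<\frac{(2k-1)^2}{4}$, so the induction hypothesis is unavailable. The large-degree case is then delegated to an unspecified ``averaging argument'' over omittable vertices, which is never given and is not what is needed.

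\textbf{The missing idea is a direct argument when $d(v)\ge \frac n2$.} All neighbours of $v$ lie on the $(n-1)$-cycle $C'=u_0u_1\cdots u_{n-2}u_0$. Their index set $S\subseteq\mathbb{Z}_{n-1}$ satisfies $|S|>\frac{n-1}{2}$. So for every $s\in\{1,\dots,n-2\}$ the sets $S$ and $S-s$ intersect. This gives $u_a,u_{a+s}\in N(v)$ and the cycle $vu_au_{a+1}\cdots u_{a+s}v$ of length $s+2$. Hence $G$ is pancyclic outright, with no choice of $v$ required.

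\textbf{Only the case $d(v)\le\frac{n-1}{2}$ goes to induction, and there the count works.}
\begin{itemize}
\item For even $n$: $d(v)\le\frac n2-1$, so $e(G-v)\ge\frac{(n-1)^2}{4}+\frac34$.
\item For odd $n$: $e(G)\ge\frac{n^2+3}{4}$, so $e(G-v)\ge\frac{(n-1)^2}{4}+1$.
\end{itemize}
The odd-$n$ bound is also what actually excludes $G-v\cong K_{\frac{n-1}{2},\frac{n-1}{2}}$. Your parity remark does not, since $n-1$ is even in that case.
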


Later, Bondy \cite{Bondy1971} posed the following meta-conjecture.
\begin{conjecture}[\upshape\cite{Bondy1971}]\label{con1-1}
Almost any nontrivial condition on a graph that implies Hamiltonicity also implies pancyclicity, with at most a small family of exceptional graphs.
\end{conjecture}

Inspired by Conjecture \ref{con1-1}, we obtain the following Theorem \ref{thm1-4} by Theorems \ref{thm1-2}-\ref{thm1-1} immediately.

\begin{theorem}\label{thm1-4}
Let $G$ be an $n$-vertex graph with $n\ge3$.
\begin{enumerate}[label=(\roman*), font=\upshape, itemsep=2pt, align=left, leftmargin=1em]
 \item If $e(G)\ge \binom{n-1}{2}+1$, then $G$ is pancyclic unless $G\in\{K_1\vee(K_{n-2}\cup K_1),K_2\vee 3K_1,K_{2,2}\}$.
 \item If $\rho(G)> n-2$, then $G$ is pancyclic unless $G\cong K_1\vee(K_{n-2}\cup K_1)$.
\end{enumerate}
\end{theorem}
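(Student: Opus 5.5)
For part (i), I will split according to whether $G$ is Hamiltonian.

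\emph{Non-Hamiltonian case.} If $e(G)>\binom{n-1}{2}+1$, Theorem \ref{thm1-2} makes $G$ Hamiltonian. So a non-Hamiltonian $G$ has exactly $\binom{n-1}{2}+1$ edges, and Theorem \ref{thm1-3} gives $G\cong K_1\vee(K_{n-2}\cup K_1)$ or $G\cong K_2\vee 3K_1$. Both lie in the exceptional list.

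\emph{Hamiltonian case, $n\ge 5$.} I want to apply Theorem \ref{thm1-1}, so I must check $\binom{n-1}{2}+1\ge \frac{n^2}{4}$. This is equivalent to $2(n-1)(n-2)+4\ge n^2$, that is, $n^2-6n+8\ge 0$, or $(n-2)(n-4)\ge 0$. This holds for every $n\ge 4$ and also for $n=2$, but fails at $n=3$, so small orders need separate treatment. For $n\ge 5$, Theorem \ref{thm1-1} says $G$ is pancyclic or $G\cong K_{n/2,n/2}$. The balanced complete bipartite graph is excluded because $\frac{n^2}{4}<\binom{n-1}{2}+1$ whenever $(n-2)(n-4)>0$, which holds for $n\ge5$.

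\emph{$n=4$.} Here $e(G)\ge 4$. Theorem \ref{thm1-1} applies, and $K_{2,2}$ has exactly $4=\binom{3}{2}+1$ edges. It is Hamiltonian but has no triangle, which is why it is in the exceptional list. Every other case is handled by Theorem \ref{thm1-1}.

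\emph{$n=3$.} Here $e(G)\ge 2$. With exactly $2$ edges, $G$ is a path $P_3=K_1\vee(K_1\cup K_1)$, which is $K_1\vee(K_{n-2}\cup K_1)$ with $n=3$, so it is already listed. With $3$ edges, $G=K_3$ is pancyclic.

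For part (ii), I will use Stanley's inequality $\rho^2+\rho\le 2e(G)$. From $\rho>n-2$ we get
\[
2e(G)\ge \rho^2+\rho>(n-2)(n-1),
\]
so $e(G)>\binom{n-1}{2}$, that is, $e(G)\ge\binom{n-1}{2}+1$. By part (i), $G$ is pancyclic unless it is one of the three exceptions, so it remains to check their spectral radii.

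\emph{$K_{2,2}$ (only for $n=4$).} It has $\rho=2=n-2$, which is not $>n-2$, so it is excluded.

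\emph{$K_2\vee 3K_1$ (with $n=5$).} I compute $\rho$ by equitable partition. Let $x$ be the common entry on the two dominating vertices and $y$ the common entry on the three others. The eigen-equations are
\[
\rho x = x+3y,\qquad \rho y = 2x .
\]
These give $\rho^2-\rho-6=0$, so $\rho=3=n-2$, and this graph is excluded too.

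\emph{$K_1\vee(K_{n-2}\cup K_1)$.} This graph is the genuine exception. It contains $K_{n-1}$ as a proper subgraph, and since it is connected, $\rho>n-2$ strictly.

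The main care point is the small orders $n=3,4$, where the edge-count threshold interacts with Theorem \ref{thm1-1}. Beyond that, the only computation is the spectral radius of $K_2\vee3K_1$.
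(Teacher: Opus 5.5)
Your proposal is correct and follows the paper's own route. You split (i) on Hamiltonicity using Theorems \ref{thm1-2}, \ref{thm1-3} and \ref{thm1-1} together with $(n-2)(n-4)\ge 0$, and you obtain (ii) from Stanley's inequality and the spectral radii of the three exceptions. The only cosmetic differences are that you treat $n=4$ separately from $n\ge 5$ instead of deducing $n=4$ from $e(K_{n/2,n/2})=n^2/4$, and that you check $\rho(K_2\vee 3K_1)=3$ explicitly by an equitable partition.
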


\begin{proof}
For (i), we proceed by considering whether $G$ is Hamiltonian.

If $G$ is not Hamiltonian, then $e(G)\le \binom{n-1}{2}+1$ by Theorem \ref{thm1-2}, and thus $e(G)=\binom{n-1}{2}+1$ since $e(G)\ge \binom{n-1}{2}+1$. It follows that $G\cong K_1\vee(K_{n-2}\cup K_1)$ or $G\cong K_2\vee 3K_1$ by Theorem \ref{thm1-3}.

If $G$ is Hamiltonian, then the case $n=3$ is immediate. For $n\ge 4$, $\binom{n-1}{2}+1-\frac{n^2}{4}=\frac{(n-2)(n-4)}{4}\ge 0$, which implies $e(G)\ge \frac{n^2}{4}$. By Theorem \ref{thm1-1}, $G$ is pancyclic or $G\cong K_{\frac{n}{2},\frac{n}{2}}$. When $G\cong K_{\frac{n}{2},\frac{n}{2}}$, we have $e(G)=\frac{n^2}{4}$, which implies $n=4$ since $e(G)\ge \binom{n-1}{2}+1$ and $n\ge 4$, say, $G\cong K_{2,2}$.

Therefore, $G$ is pancyclic unless $G\in\{K_1\vee(K_{n-2}\cup K_1),K_2\vee 3K_1,K_{2,2}\}$.

For (ii), we have $e(G)\ge \binom{n-1}{2}+1$ since $\rho(G)> n-2$ and $\rho^2(G)+\rho(G)\le 2e(G)$. By (i), $G$ is pancyclic unless $G\in\{K_1\vee(K_{n-2}\cup K_1),K_2\vee 3K_1,K_{2,2}\}$. It is easy to obtain that $\rho(K_1\vee(K_{n-2}\cup K_1))>\rho(K_{n-1}\cup K_1)=n-2$, $\rho(K_2\vee 3K_1)=3=n-2$ and $\rho(K_{2,2})=2=n-2$. Therefore, $G$ is pancyclic unless $G\cong K_1\vee(K_{n-2}\cup K_1)$.

Therefore, this completes the proof.
\end{proof}

\subsection{Main results}
In recent years, many classical theorems have been investigated in the context of graph collections, yielding their rainbow analogues. Let $\mathbf{G}=\{G_1,\dots,G_s\}$ be a collection of not necessarily distinct graphs with a common vertex set $V$.
For a graph $H$ with $V(H)\subseteq V$ and $|E(H)|\leq s$, if there exists an injection $\phi\colon E(H)\to [s]$ such that $e\in E(G_{\phi(e)})$ for each $e\in E(H)$, in other words, each edge of $H$ comes from a distinct graph $G_i$ with $i\in[s]$, then we say that $H$ is \emph{rainbow} in $\mathbf{G}$. A collection of $n$-vertex graphs with a common vertex set is said to be \emph{rainbow pancyclic} if it contains a rainbow cycle of every length from 3 to $n$.

More work on graph collections is already available, including general graph collections \cite{Cheng2026,Cheng,Guo,Guo2026,He,Joos,Li2023,Li2024,LiandWang,LiuandChen,MaandZhang,Sun,Zhang2026}, bipartite graph collections \cite{Bradshaw,Chen,Chen2026,He2026,Hu,Ma}, directed graph collections \cite{Babinski,Chakraborti,Gerbner,Li2025}, hypergraph collections \cite{Cheng2023,Cheng2025,Gao2022,Lu2022,Lu2021} and so on.

In 2020, Joos and Kim \cite{Joos} proved that if $\mathbf{G}=\{G_1,\dots, G_n\}$ is a collection of $n$ graphs with a common vertex set $V$ with $|V|=n$ such that $\delta(G_i)\geq\frac{n}{2}$ for each $i\in[n]$, then there exists a rainbow Hamiltonian cycle in $\mathbf{G}$. This result can be regarded as a generalization of Dirac's theorem, since it reduces to Dirac's theorem when $G_1=\dots=G_n$.
In line with Conjecture \ref{con1-1}, Li, Li and Li \cite{Li2024} showed that the same degree condition further guarantees rainbow pancyclicity of $\mathbf{G}$ unless $G_1=\dots=G_n\cong K_{\frac{n}{2},\frac{n}{2}}$.

In 1963, Moon and Moser \cite{Moon} established a Dirac-type condition for Hamiltonicity in balanced bipartite graphs. In 1982, Schmeichel and Mitchem \cite{Schmeichel} further extended this result to bipancyclicity. In 2021, Bradshaw \cite{Bradshaw} generalized both classical results above to graph collections and obtained their corresponding rainbow analogues.
Recently, Liu, Chen and Ma \cite{LiuandChen} established a stronger Ore-type condition for the existence of rainbow Hamiltonian cycles in graph collections. Under the same Ore-type condition, Li, Wang and Yan \cite{LiandWang} further considered the rainbow pancyclicity.

Overall, these results lend further support to Conjecture \ref{con1-1} in the context of graph collections. For more results, we refer the readers to the survey \cite{SunandWang}.

In 2025, Zhang and van Dam \cite{Zhang} considered rainbow versions of Theorem \ref{thm1-2} and Theorem \ref{pro1-1}, and obtained the following results.

\begin{theorem}[\upshape\cite{Zhang}]\label{thm1-5}
Let $n\ge 4$ and $\mathbf{G}=\{G_1, \dots, G_{n}\}$ be a collection of not necessarily distinct $n$-vertex graphs with a common vertex set $V$.
\begin{enumerate}[label=(\roman*), font=\upshape, itemsep=2pt, align=left, leftmargin=1em]
 \item If $e(G_i)> \binom{n-1}{2}+1$ for each $i\in [n]$, then $\mathbf{G}$ contains a rainbow Hamiltonian cycle.
 \item If $\rho(G_i)> n-2$ for each $i\in [n]$, then $\mathbf{G}$ contains a rainbow Hamiltonian cycle unless $G_1=G_2=\dots=G_n\cong K_1\vee(K_{n-2}\cup K_1)$.
\end{enumerate}
\end{theorem}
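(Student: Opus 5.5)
Theorem \ref{thm1-5} is a result quoted from \cite{Zhang}; if I had to reprove it, I would argue by induction on $n$, with (ii) following from (i).

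\emph{Reduction of (ii) to (i).} Stanley's inequality $\rho^2(G_i)+\rho(G_i)\le 2e(G_i)$ together with $\rho(G_i)>n-2$ gives $e(G_i)>\binom{n-1}{2}$, so $e(G_i)\ge\binom{n-1}{2}+1$. It therefore suffices to handle the boundary case $e(G_i)=\binom{n-1}{2}+1$ for some indices. The candidate extremal graphs are those of Theorem \ref{thm1-3}. Of these, $K_2\vee 3K_1$ has $\rho=n-2$ and is excluded by the spectral hypothesis, so the only remaining bad graph is $K_1\vee(K_{n-2}\cup K_1)$. One then checks that if not all $G_i$ equal this same graph, a rainbow Hamiltonian cycle can still be found. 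The idea is to use a $G_j$ that is either different or a differently placed copy to supply one edge at the pendant vertex.

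\emph{Proof of (i) by induction.} Take $e(G_i)\ge\binom{n-1}{2}+2$. Fix a vertex $v$ of minimum total degree and delete it, keeping $n-1$ of the graphs.
\begin{itemize}
\item First, I choose two graphs $G_a,G_b$ in which $v$ has at least two distinct neighbours $x,y$, with $xv\in G_a$ and $yv\in G_b$.
\item Next, each graph $G_i-v$ loses at most $n-1$ edges. The aim is to show that the remaining $n-2$ graphs restricted to $V\setminus\{v\}$ contain a rainbow Hamiltonian path from $x$ to $y$. For this I would add the edge $xy$ and apply a rainbow Ore/Bondy--Chvátal-type closure.
\item A cleaner alternative is a direct counting argument. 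Start from a longest rainbow path or cycle, and use the standard rotation argument: if no extension exists, then for each colour class the non-edges force $e(G_i)\le\binom{n-1}{2}+1$.
\end{itemize}

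\emph{Main obstacle.} The key step is the colour-swapping version of Pósa rotations. Suppose $C$ is a rainbow cycle of length $n-1$ missing a vertex $u$. Let $G_k$ be the unused colour. For each consecutive pair $c_jc_{j+1}$ on $C$ with colour $\phi$, I want $u$ adjacent to $c_j$ in some colour and to $c_{j+1}$ in another, after re-assigning colours. The usual pigeonhole count $d(u)+\dots$ must be replaced by a count over all $n$ graphs. The edge bound forces each $G_i$ to miss at most $n-3$ edges, so $u$ has degree at least $2$ in every $G_i$. The hard part is showing that the degree can be pushed high enough in enough colours to find a consecutive pair, and this needs a careful case analysis when $u$ has low degree in many $G_i$. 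In that case, nearly all edges of every $G_i$ lie inside $V\setminus\{u\}$, which should let me rebuild the cycle through $u$ using near-completeness.
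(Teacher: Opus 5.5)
Both parts of your proposal have a genuine gap. As written it is a plan whose decisive steps are missing.

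\textbf{Part (i).} The induction does not close. After deleting $v$, each $G_i-v$ is only guaranteed $\binom{n-1}{2}+2-(n-1)=\binom{n-2}{2}+1$ edges. That is one short of the threshold $\binom{n-2}{2}+2$ your hypothesis requires on $n-1$ vertices. In any case, what you need on $V\setminus\{v\}$ is a rainbow Hamiltonian $x$--$y$ path using $n-2$ prescribed graphs, and a statement about rainbow Hamiltonian cycles does not provide that. Your fallbacks are a rainbow Bondy--Chv\'atal closure and colour-swapping P\'osa rotations, but neither is an available tool. Each rotation changes the edge set, so the colour assignment must be re-solved every time. You also leave open exactly the hard case, where the uncovered vertex has low degree in many $G_i$.

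\textbf{Part (ii).} The reduction to (i) is invalid. Stanley's inequality gives only $e(G_i)\ge\binom{n-1}{2}+1$. So (i) is unavailable as soon as one $G_i$ has exactly $\binom{n-1}{2}+1$ edges, for example a copy of $K_1\vee(K_{n-2}\cup K_1)$, which does satisfy $\rho>n-2$. Theorem \ref{thm1-3} describes single graphs, not collections. It says nothing about collections that mix differently placed copies of this graph with other graphs. Your \emph{one then checks} is therefore the whole content of (ii). After passing to edge counts, it is essentially the boundary question posed as Problem \ref{prob1-1}, which \cite{Zhang} left open.

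\textbf{The missing idea.} This paper recovers and strengthens the statement via Lemma \ref{lem2-2} and Theorems \ref{thm1-7} and \ref{thm1-8}. The key move is to work with the union $G=\bigcup_{i}G_i$ rather than with individual colour classes. Since $e(G)\ge e(G_i)\ge\binom{n-1}{2}+1$, Theorems \ref{thm1-2} and \ref{thm1-3} apply to $G$. Either $G$ is Hamiltonian (automatically so in (i)), or $e(G)=\binom{n-1}{2}+1$. In the latter case, $G_i\subseteq G$ with $e(G_i)\ge e(G)$ forces $G_1=\dots=G_n=G\in\{K_1\vee(K_{n-2}\cup K_1),K_2\vee 3K_1\}$. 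In (ii) the second option is excluded because $\rho(K_2\vee 3K_1)=3=n-2$.

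If $G$ is Hamiltonian, set $\omega(e)=|\{i:e\notin E(G_i)\}|$ and $m=e(\overline{G})$. If $m=n-2$, all $G_i$ equal $G$ and there is nothing to do. Otherwise:
\begin{itemize}
\item $\sum_e\omega(e)\le n(n-2-m)$;
\item by the switching argument of Lemma \ref{lem2-1}, each edge lies in at most $2N_n/(n-1-m)$ of the $N_n$ Hamiltonian cycles of $G$;
\item hence some Hamiltonian cycle $C$ has $\omega(C)<2(n-1)$.
\end{itemize}
Suppose $C$ could not be rainbow-coloured. Hall's theorem would give $S\subseteq E(C)$ such that at least $n-|S|+1$ of the $G_i$ miss every edge of $S$. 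Then $2\le|S|\le n-2$, since each $\overline{G_i}$ has at most $n-2$ edges. This gives $\omega(C)\ge|S|(n-|S|+1)\ge2(n-1)$, a contradiction. No induction, closure or rotation is needed.
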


In \cite{Zhang}, they also posed the following problem based on Theorems \ref{thm1-2} and \ref{thm1-3}.

\begin{problem}[\upshape\cite{Zhang}]\label{prob1-1}
Let $n\ge 6$ and $\mathbf{G}=\{G_1, \dots, G_{n}\}$ be a collection of not necessarily distinct $n$-vertex graphs with a common vertex set $V$ satisfying $e(G_i)\ge \binom{n-1}{2}+1$ for each $i\in [n]$. Does $\mathbf{G}$ contain a rainbow Hamiltonian cycle unless $G_1=G_2=\dots=G_n\cong K_1\vee(K_{n-2}\cup K_1)$?
\end{problem}

In this paper, in order to answer Problem \ref{prob1-1}, we obtain Theorem \ref{thm1-7}, which generalizes (i) of Theorem \ref{thm1-4} from graphs to graph collections. Moreover, Theorem \ref{thm1-7} supports Conjecture \ref{con1-1} in the context of graph collections, answers Problem \ref{prob1-1} and strengthens the result from rainbow Hamiltonicity to rainbow pancyclicity.

\begin{theorem}\label{thm1-7}
Let $n\ge 3$ and $\mathbf{G}=\{G_1, \dots, G_{n}\}$ be a collection of not necessarily distinct $n$-vertex graphs with a common vertex set $V$. If $e(G_i)\ge \binom{n-1}{2}+1$ for each $i\in [n]$, then $\mathbf{G}$ is rainbow pancyclic unless one of the following holds:
\begin{enumerate}[label=(\roman*), font=\upshape, itemsep=2pt, align=left, leftmargin=1em]
 \item $G_1=G_2=\dots=G_n\cong K_1\vee(K_{n-2}\cup K_1)$;
 \item $n=5$ and $G_1=G_2=\dots=G_5\cong K_2\vee 3K_1$;
 \item $n=4$ and $G_1=G_2=G_3=G_4\cong K_{2,2}$.
\end{enumerate}
\end{theorem}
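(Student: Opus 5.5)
Since every $G_i$ has at least $\binom{n-1}{2}+1$ edges, each misses at most $n-2$ edges of $K_n$. The plan is to build rainbow cycles of each length $\ell\in\{3,\dots,n\}$ directly, and to use Theorem \ref{thm1-4}(i) to locate a good base graph. The exceptional collections are settled first. If $G_1=\dots=G_n$, then rainbow pancyclicity of $\mathbf{G}$ is equivalent to pancyclicity of $G_1$, so Theorem \ref{thm1-4}(i) gives exactly the exceptions (i)--(iii); note that $K_2\vee 3K_1$ has $n=5$ and $K_{2,2}$ has $n=4$. Hence we may assume that not all $G_i$ are equal. Small $n$ ($n\le 5$) will be checked directly by finite case analysis, so the main argument is for $n\ge 6$.

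\textbf{Short cycles.} For a cycle of length $\ell<n$, we pick $\ell$ vertices. Every vertex set $S$ of size $\ell$ spans a complete graph minus at most $n-2$ missing edges in each $G_i$. The number of $\ell$-cycles in $K_\ell$ is huge compared with the number destroyed by $n-2$ missing edges per color. A cleaner route is a greedy or Hall-type argument. Take a cycle $C$ of length $\ell$ in the union. Form the bipartite graph between $E(C)$ and the colors $[n]$, where $e\sim i$ if $e\in E(G_i)$. Each edge $e$ is missed by at most those colors whose complement contains $e$. Choose $C$ to avoid the edges missed by many colors: the total number of (edge, color) non-incidences is at most $n(n-2)$, so the edges missing in at least $n-\ell+1$ colors are few, roughly at most $n(n-2)/(n-\ell+1)$. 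We must find an $\ell$-cycle avoiding these ``bad'' edges, and then Hall's condition holds for $C$ against the colors. For $\ell$ close to $n$ this counting is too weak, so those lengths are handled via the Hamiltonian-type argument below.

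\textbf{Long cycles and Hamiltonicity.} Following the approach used for Theorem \ref{thm1-5}, we consider a graph $G_j$. By Theorem \ref{thm1-4}(i) it is pancyclic unless it is one of the three exceptions. Take an $\ell$-cycle $C$ in $G_j$, with $\ell\le n$, and try to rainbow-color it by Hall's theorem: an edge $e$ of $C$ fails only for colors $i$ with $e\notin G_i$. If Hall fails, some set $T$ of colors with $|T|\ge n-\ell+1$ all miss many edges of $C$. Since each $G_i$ misses at most $n-2$ edges, this forces strong structure: some $G_i$ (or several) has a vertex of degree $\le 1$, or is extremal. In that case we re-route. When a color class has a vertex $v$ of degree $\le 1$, $G_i$ restricted to $V\setminus\{v\}$ is nearly complete, and we rotate $C$ using $K_{n-1}$-like structure in the other graphs. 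The contrapositive form to aim for is this: if no rainbow $\ell$-cycle exists, then every $G_i$ equals the same extremal graph. I would show, by counting missing edges across all $n$ graphs, that each $G_i$ must have exactly $\binom{n-1}{2}+1$ edges and be non-pancyclic. By Theorem \ref{thm1-4}(i), each $G_i$ is then one of the exceptions. Finally, if two of them differ (different copies, e.g. different pendant vertices in $K_1\vee(K_{n-2}\cup K_1)$), we exhibit a rainbow cycle by using edges from the differing graph to cover the low-degree vertex.

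\textbf{Main obstacle.} The main obstacle is the Hamiltonian case $\ell=n$ (and $\ell=n-1$) when the graphs are mixtures of near-extremal graphs. The low-degree vertices vary across $G_i$, and we must allocate the few graphs that give those vertices degree $\ge 2$ to cover them. I would first try a switching argument: take a rainbow cycle maximizing the length (or a rainbow Hamiltonian path), then use the $\ge n-2$ remaining degree inside the common near-clique to perform Pósa rotations while reassigning colors. The case analysis distinguishing whether the minimum-degree vertices coincide across the collection is the delicate step.
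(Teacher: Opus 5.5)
Your proposal has a genuine gap. Neither the short-cycle part nor the long-cycle part is actually carried out, and the structural step that the long-cycle part rests on is false.

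You argue that if Hall's condition fails for a chosen $\ell$-cycle $C$, then some colours ``miss many edges of $C$'', and that this forces a low-degree vertex or an extremal $G_i$. But a Hall violator only gives a set $S\subseteq E(C)$ and at least $n-|S|+1$ colours each missing all of $S$, and $|S|=2$ is possible. Here is a concrete counterexample for $n\ge 5$:
\begin{itemize}
\item take any $\ell$-cycle $C$ on $V$ and two edges $e,f\in E(C)$;
\item put $G_1=\dots=G_{n-1}=K_n-\{e,f\}$ and $G_n=K_n$.
\end{itemize}
Each $G_i$ has $\binom{n-1}{2}+n-3>\binom{n-1}{2}+1$ edges and minimum degree at least $n-3\ge 2$. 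Yet $C$ is not rainbow, since $N_B(\{e,f\})=\{n\}$. So Hall failure for a fixed cycle, whether taken from $G_j$ or anywhere else, carries no structural information. Your claim that the absence of a rainbow $\ell$-cycle forces every $G_i$ to have exactly $\binom{n-1}{2}+1$ edges is just the theorem restated, with no mechanism offered.

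Your short-cycle criterion (avoid edges lying in fewer than $\ell$ of the $G_i$) throws away the joint nature of Hall obstructions, namely many colours missing the \emph{same} set $S$. As you note yourself, it gives nothing for $\ell$ near $n$. The P\'osa-rotation idea and the ``finite case analysis'' for $n\le 5$ are left as intentions.

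The missing idea is how to \emph{choose} the cycle. The paper's route is as follows.
\begin{itemize}
\item Pass to the union $G=\bigcup_i G_i$. Since $e(G)\ge\binom{n-1}{2}+1$, Theorem~\ref{thm1-4}(i) makes $G$ pancyclic unless it is one of the three exceptional graphs. In that case $e(G)=\binom{n-1}{2}+1$ together with $G_i\subseteq G$ forces $G_i=G$ for all $i$. This replaces your separate treatment of equal collections and of small $n$.
\item When $G$ is pancyclic, set $\omega(e)=|\{i:e\notin E(G_i)\}|$. Then $\sum_e\omega(e)\le n(n-2-m)$, where $m=e(\overline{G})$.
\item If $m=n-2$, all $G_i$ coincide. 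Otherwise the switching lemma (Lemma~\ref{lem2-1}) shows each edge lies in at most $2N_\ell/(n-1-m)$ of the $N_\ell$ $\ell$-cycles of $G$.
\item Averaging then yields an $\ell$-cycle $C$ with $\omega(C)<2(n-1)$.
\item For this $C$, Hall cannot fail. A violator $S$ has $2\le |S|\le n-2$ and at least $n-|S|+1$ colours missing all of $S$. Hence $\omega(C)\ge |S|(n-|S|+1)=2(n-1)+(|S|-2)(n-|S|-1)\ge 2(n-1)$, a contradiction.
\end{itemize}
This handles every $\ell$, including $\ell=n$, uniformly, with no rotation arguments.
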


Next, by Theorem \ref{thm1-7} and Stanley's inequality, we obtain the following Theorem \ref{thm1-8}, which strengthens (ii) of Theorem \ref{thm1-5} from rainbow Hamiltonicity to rainbow pancyclicity and is the rainbow version of (ii) of Theorem \ref{thm1-4}.

\begin{theorem}\label{thm1-8}
Let $n\ge 3$ and $\mathbf{G}=\{G_1, \dots, G_{n}\}$ be a collection of not necessarily distinct $n$-vertex graphs with a common vertex set $V$. If $\rho(G_i)> n-2$ for each $i\in [n]$, then $\mathbf{G}$ is rainbow pancyclic unless $G_1=G_2=\dots=G_n\cong K_1\vee(K_{n-2}\cup K_1)$.
\end{theorem}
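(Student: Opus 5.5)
The plan is to derive Theorem \ref{thm1-8} from Theorem \ref{thm1-7}, following the proof of (ii) in Theorem \ref{thm1-4}.

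First, for each $i\in[n]$, Stanley's inequality $\rho^2(G_i)+\rho(G_i)\le 2e(G_i)$ together with $\rho(G_i)>n-2$ gives
\[
2e(G_i)>(n-2)^2+(n-2)=(n-2)(n-1).
\]
Hence $e(G_i)>\binom{n-1}{2}$. Since $e(G_i)$ is an integer, $e(G_i)\ge\binom{n-1}{2}+1$. So the hypothesis of Theorem \ref{thm1-7} holds for every member of $\mathbf{G}$.

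By Theorem \ref{thm1-7}, $\mathbf{G}$ is rainbow pancyclic unless one of the exceptional configurations (i)--(iii) occurs. We then check these against the spectral hypothesis.

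\textbf{Configuration (ii).} Here $n=5$ and every $G_i\cong K_2\vee 3K_1$. The spectral radius of $K_2\vee 3K_1$ is $3=n-2$, which violates $\rho(G_i)>n-2$. To confirm the value, use the equitable partition into the $K_2$ part and the $3K_1$ part. Its quotient matrix is $\begin{pmatrix}1&3\\2&0\end{pmatrix}$, with eigenvalues $3$ and $-2$. So this case cannot occur.

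\textbf{Configuration (iii).} Here $n=4$ and every $G_i\cong K_{2,2}$. Then $\rho(K_{2,2})=2=n-2$, so this case is also excluded.

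\textbf{Configuration (i).} Here every $G_i\cong K_1\vee(K_{n-2}\cup K_1)$. This graph properly contains $K_{n-1}\cup K_1$ as a spanning subgraph and is connected. By Perron--Frobenius monotonicity, $\rho>\rho(K_{n-1}\cup K_1)=n-2$. So this configuration genuinely satisfies the hypothesis and remains as the stated exception.

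The only step needing care is the integer rounding of Stanley's bound, which is immediate. The rest is a routine evaluation of the spectral radii of the three small exceptional graphs. Nothing here should be a real obstacle, since all the substance is contained in Theorem \ref{thm1-7}.
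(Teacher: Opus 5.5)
Your proposal is correct and follows the paper's proof: Stanley's inequality gives $e(G_i)\ge\binom{n-1}{2}+1$, Theorem~\ref{thm1-7} then leaves only the three exceptional collections, and $K_2\vee 3K_1$ and $K_{2,2}$ are excluded because their spectral radius equals $n-2$. Your quotient-matrix and Perron--Frobenius arguments simply justify the spectral-radius values that the paper calls clear.
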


Finally, we characterize all graph collections that are not rainbow pancyclic under the condition $\rho(G_i)\ge n-2$ for each $i\in[n]$. For the convenience of stating the result, we introduce the following notation. For each $v\in V$, let $F_v$ ($\cong K_{n-1}\cup K_1$) denote the graph on vertex set $V$ of order $n$ such that $v$ is an isolated vertex and $V\setminus\{v\}$ induces a complete graph $K_{n-1}$. For distinct $u,v\in V$, let $A_{v,u}$ be the graph obtained from $F_v$ by adding the edge $vu$. Then $A_{v,u}\cong K_1\vee(K_{n-2}\cup K_1)$ and $v$ is the pendant vertex of $A_{v,u}$.

\begin{theorem}\label{thm1-9}
Let $n\ge 3$ and $\mathbf{G}=\{G_1, \dots, G_{n}\}$ be a collection of not necessarily distinct $n$-vertex graphs with a common vertex set $V$. If $\rho(G_i)\geq n-2$ for each $i\in [n]$, then $\mathbf{G}$ is not rainbow pancyclic if and only if one of the following holds:
\begin{enumerate}[label=(\roman*), font=\upshape, itemsep=2pt, align=left, leftmargin=1em]
 \item there exists a vertex $v\in V$ such that $|\{i\in[n]:G_i=F_v\}| \ge n-1$;
 \item there exist distinct $u,v\in V$ such that for each $i\in[n]$,
 \[G_i\in
 \begin{cases}
 \{F_v,F_w,A_{v,u}\}, \text{where } V=\{u,v,w\}, & \text{if } n=3;\\[4pt]
 \{F_v,A_{v,u}\}, & \text{if } n\ge 4.
 \end{cases}
 \]
 \item $n=5$ and $G_1=G_2=\dots=G_5\cong K_2\vee 3K_1$;
 \item $n=4$ and $G_1=G_2=G_3=G_4\cong K_{2,2}$.
\end{enumerate}
\end{theorem}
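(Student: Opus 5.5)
We prove both directions, using Theorem \ref{thm1-7} and a description of the graphs with $\rho(G)\ge n-2$.

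\textbf{Step 1: graphs with $\rho(G)\ge n-2$.} Stanley's inequality $\rho^2+\rho\le 2e(G)$ with $\rho\ge n-2$ gives $e(G)\ge \binom{n-1}{2}$. If $e(G)=\binom{n-1}{2}$, equality forces $\rho=n-2$, and Stanley's inequality is tight only for a clique plus isolated vertices, so $G=F_v$ for some $v$. If $e(G)\ge\binom{n-1}{2}+1$, then Theorem \ref{thm1-4}(i) says $G$ is pancyclic or $G\in\{K_1\vee(K_{n-2}\cup K_1),K_2\vee 3K_1,K_{2,2}\}$, and $\rho(K_2\vee 3K_1)=3$, $\rho(K_{2,2})=2$ are admissible. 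So each $G_i$ is either $F_v$, or has at least $\binom{n-1}{2}+1$ edges. For $n=3$ one checks directly that $F_w$ is a single edge and $A_{v,u}$ a path.

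\textbf{Step 2: sufficiency.} Each listed collection is not rainbow pancyclic. In (i), at most one graph contains edges at $v$, so no rainbow Hamiltonian cycle exists. In (ii) with $n\ge4$, every graph has $v$ of degree at most $1$ with the single neighbour $u$, so $v$ lies on no cycle. For $n=3$, $V=\{u,v,w\}$, the only edge at $v$ in $F_v,F_w,A_{v,u}$ is $vu$, so again there is no triangle. Cases (iii) and (iv) are the exceptions of Theorem \ref{thm1-7}. Finally, check each exceptional graph satisfies $\rho\ge n-2$, using $\rho(F_v)=n-2$ and $\rho(A_{v,u})>n-2$.

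\textbf{Step 3: necessity, the main obstacle.} Suppose $\mathbf{G}$ is not rainbow pancyclic.
\begin{itemize}
\item If no $G_i$ is of the form $F_v$, then every $G_i$ has at least $\binom{n-1}{2}+1$ edges, and Theorem \ref{thm1-7} yields (ii) with all $G_i\cong A_{v,u}$ for a common $v,u$, or yields (iii) or (iv). Note that $G_1=\dots=G_n$ means equal graphs, hence the same $v,u$.
\item Otherwise, let $k$ be the number of graphs of type $F$. If $k\ge n-1$ with a common $v$, we are in (i). In every other case we must build rainbow cycles of each length $\ell$ directly, and this is the hard part.
\end{itemize}

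\textbf{Step 4: the direct construction.} Replacing a graph by a supergraph only helps, so the strategy is to cover long cycles by edges inside cliques. Each $F_x$ contains $K_{n-1}$ on $V\setminus\{x\}$, and each remaining graph has minimum-degree and edge-count properties close to $K_n$ minus a star. To build a rainbow $C_\ell$:
\begin{itemize}
\item choose a vertex set avoiding the isolated vertices as much as possible;
\item use the graphs that have edges at a problematic vertex $v$ for the two cycle edges at $v$;
\item assign the remaining edges greedily via Hall's theorem, since each edge away from the special vertices lies in almost all $G_i$.
\end{itemize}
The obstruction arises only when at most one graph offers any edge at some vertex $v$, or when all offer only the edge $vu$. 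Analysing this yields exactly (i) and (ii). I would handle $n\le 4$ by hand, and for general $n$ do a case analysis on how many distinct isolated vertices appear among the $F$-type graphs, whether there are at least two such distinct vertices, or one vertex with at most $n-2$ copies.
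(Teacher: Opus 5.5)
\textbf{There is a genuine gap: the necessity direction is proved only in the easy case.} Your sufficiency direction is fine. So is the necessity case where no $G_i$ is of the form $F_x$, which you reduce to Theorem~\ref{thm1-7}. The remaining necessity cases are not proved. These are collections with at least one $F$-type graph together with graphs having at least $\binom{n-1}{2}+1$ edges, or with $F$-type graphs having several distinct isolated vertices. Step~4 is a plan rather than an argument, and its one concrete justification is false.

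You assign colours greedily via Hall's theorem ``since each edge away from the special vertices lies in almost all $G_i$''. But a graph with $\rho(G_i)\ge n-2$ and $e(G_i)\ge\binom{n-1}{2}+1$ need not have its non-edges concentrated at a few vertices. Moreover, all such graphs in $\mathbf{G}$ may share the same non-edges. For example, let $n\ge 5$, fix $x\in V$ and a matching $M$ of size $\lfloor (n-1)/2\rfloor\ge 2$ in $V\setminus\{x\}$, and take $G_1=F_x$ and $G_2=\dots=G_n=K_n-M$.
\begin{itemize}
\item Every $G_i$ satisfies $\rho(G_i)\ge n-2$, since the average degree of $K_n-M$ exceeds $n-2$.
\item Every vertex has degree at least $n-2$ in $G_2,\dots,G_n$, so no vertex is ``special''.
\item Yet each edge of $M$ lies in $G_1$ only, so no $\ell$-cycle through two edges of $M$ can be rainbow.
\end{itemize}
So the real difficulty is \emph{which} $\ell$-cycle of the union to take. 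Your sentence that the obstruction arises only when at most one graph offers an edge at some $v$, or all offer only $vu$, restates the theorem rather than advancing it.

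\textbf{The missing idea is a global selection of the cycle by averaging, which the paper supplies.} Lemma~\ref{lem2-1}, a cycle-switching count, gives $N_{e,\ell}\le 2N_\ell/(n-1-m)$ for every edge of the union $G$. Averaging (Lemma~\ref{lem4-1}) then produces an $\ell$-cycle $C$ with $\omega_T(C)<2t$. With this $C$, the bound $\omega_T(C)<2t$ does the following:
\begin{itemize}
\item It forces any Hall-deficient set $S$ to be the pair of $C$-edges at the isolated vertex $v_j$ of some $F$-graph, lying in a single graph of $\mathbf{G}$ (Claim~1 and what follows it).
\item It then forces every $F$-graph to equal $F_{v_j}$ (Claim~2).
\end{itemize}
Rerouting $C$ through a neighbour of $v_j$ in a second $T$-graph leaves a bipartite graph that is $K_{\ell-3,n-3}$ minus at most one edge. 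Observation~\ref{ob4-1} finishes, with $n=\ell=4$ checked by hand; this is Lemma~\ref{lem4-2}.

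\textbf{The paper also organises necessity differently.} It splits by the union $G$, not by the number of $F$-type graphs. If $G$ is pancyclic, Lemma~\ref{lem4-2} applies to every $\ell$. Otherwise $G$ is $F_v$ or one of the three exceptional graphs of Theorem~\ref{thm1-4}. Then $G_i\subseteq G$, together with Corollary~\ref{cor1} or Perron--Frobenius, yields (ii)--(iv) directly. In particular, the mixed $\{F_v,A_{v,u}\}$ collections of (ii) come out with no cycle construction. In your organisation, they would have to be recovered from the unwritten case analysis of Step~4.
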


\subsection{Notation and organization}
Here, we introduce some basic notation and concepts in graph theory. All graphs considered in this paper are simple, finite and undirected.
Let $G=(V(G),E(G))$ be a simple graph with vertex set $V(G)$ and edge set $E(G)$, where $|E(G)|=e(G)$. The \emph{adjacency matrix} $A(G)=(a_{uv})_{u,v\in V(G)}$ of $G$ is a $|V(G)|\times |V(G)|$ matrix, where $a_{uv}=1$ if $uv\in E(G)$, and $a_{uv}=0$ otherwise. The maximum of the moduli of the eigenvalues of $A(G)$ is called the \emph{spectral radius} of $G$ and denoted by $\rho(G)$. The \emph{complement graph} of $G$, denoted by $\overline{G}$, is a graph with the same vertex set $V$ such that any two distinct vertices $u,v\in V$ are adjacent in $\overline{G}$ if and only if they are non-adjacent in $G$. A \emph{matching} in $G$ is a set of pairwise disjoint edges.

For a vertex $u\in V(G)$, the set of neighbours of a vertex $u$ in $G$ is denoted by $N_{G}(u)$. For $U\subseteq V(G)$, the \emph{induced subgraph} $G[U]$ of $G$ satisfies $V(G[U])=U$ and $E(G[U])=\{uv\in E(G): u,v\in U\}$, and we say that the vertex subset $U$ induces the subgraph $G[U]$.
A graph $H$ is a \emph{subgraph} of a graph $G$, denoted by $H \subseteq G$,
if $V(H)\subseteq V(G)$ and $E(H)\subseteq E(G)$. We call $H$ a \emph{proper subgraph} of $G$, denoted by $H \subsetneq G$, if $H\subseteq G$ and $H \neq G$.
The \emph{union} of two graphs $H$ and $G$ is the graph $H\cup G$ with vertex set $V(H)\cup V(G)$ and edge set $E(H)\cup E(G)$.
For two disjoint graphs $H$ and $G$, the \emph{join} of $H$ and $G$, denoted by $H\vee G$, is the graph obtained from $H\cup G$ by adding all edges between $V(H)$ and $V(G)$.

We use $C_n$, $K_n$ and $K_{a,b}$ to denote the $n$-vertex cycle, the $n$-vertex complete graph and the complete bipartite graph with partitions of size $a$ and $b$, respectively. A cycle with $n$ vertices is called an $n$-cycle.
For a positive integer $n$, let $[n]=\{1,2,\dots,n\}$. For any terminology and notation that are not explicitly defined, we refer the reader to \cite{Bondy}.

The rest of this paper is organized as follows. In Section 2, we introduce some results that will be used in subsequent proofs and present the proofs of Theorems \ref{thm1-7} and \ref{thm1-8}. In Section 3, we prove Theorem \ref{thm1-9}. In Section 4, we conclude the paper.

\section{Proofs of Theorems \ref{thm1-7} and \ref{thm1-8}}
This section will be dedicated to proving Theorems \ref{thm1-7} and \ref{thm1-8}.
To this end, we first present several auxiliary results that will be used later.

\begin{theorem}[\upshape\cite{Hall}]\label{thm2-1}
Let $B$ be a bipartite graph with bipartition $(X,Y)$. There is a matching which covers every vertex in $X$ if and only if $|N_B(S)|\ge |S|$ for all $S\subseteq X$.
\end{theorem}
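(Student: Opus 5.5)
The statement immediately above is Hall's theorem, which the paper cites and does not prove. A self-contained proof sketch follows.

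Necessity is immediate. If a matching $M$ covers every vertex of $X$, then for each $S\subseteq X$ the $M$-partners of the vertices of $S$ are $|S|$ distinct vertices. All of them lie in $N_B(S)$, so $|N_B(S)|\ge |S|$.

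For sufficiency I will use strong induction on $|X|$, splitting into two cases according to whether Hall's condition holds with slack. If $|X|=1$, the condition gives the single vertex a neighbour and we are done.

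\textbf{Case 1: slack everywhere.} Suppose $|N_B(S)|\ge |S|+1$ for every nonempty proper subset $S\subsetneq X$. Pick any $x\in X$ and any neighbour $y$ of $x$; such $y$ exists since $|N_B(\{x\})|\ge 1$. Delete $x$ and $y$. In the remaining graph, every $S\subseteq X\setminus\{x\}$ loses at most the single neighbour $y$. Hence it still has at least $|S|+1-1=|S|$ neighbours, so Hall's condition survives. By induction there is a matching covering $X\setminus\{x\}$, and adding the edge $xy$ completes it.

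\textbf{Case 2: a tight set.} Otherwise there is a nonempty proper $S_0\subsetneq X$ with $|N_B(S_0)|=|S_0|$.
\begin{enumerate}[label=(\alph*)]
\item Consider the subgraph induced by $S_0\cup N_B(S_0)$. Hall's condition holds there, since neighbourhoods of subsets of $S_0$ lie inside $N_B(S_0)$. Induction gives a matching of $S_0$ into $N_B(S_0)$.
\item Consider the subgraph $B'$ on $(X\setminus S_0)\cup (Y\setminus N_B(S_0))$. For $T\subseteq X\setminus S_0$ we have
\[
|N_{B'}(T)| = |N_B(T\cup S_0)|-|N_B(S_0)| \ge |T|+|S_0|-|S_0| = |T|.
\]
So Hall's condition holds in $B'$, and induction gives a matching of $X\setminus S_0$ into $Y\setminus N_B(S_0)$.
\item The two matchings use disjoint parts of $Y$, so their union covers all of $X$.
\end{enumerate}

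The main point needing care is Case 2, specifically the neighbourhood-counting identity in step (b). It holds because $N_B(T\cup S_0)$ is the disjoint union of $N_B(S_0)$ and $N_{B'}(T)$.

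An alternative route is via König's theorem or augmenting paths. Take a maximum matching and an uncovered $x\in X$. Let $S$ be the set of vertices of $X$ reachable from $x$ by alternating paths. Then $|N_B(S)|=|S|-1$, which violates the condition. The induction above is the more self-contained option.
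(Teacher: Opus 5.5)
Your proof is correct. The necessity direction is immediate. The sufficiency direction is the standard Halmos--Vaughan induction and has no gaps:
\begin{itemize}
\item In Case 1, every nonempty $S\subseteq X\setminus\{x\}$ is a proper subset of $X$, so its surplus absorbs the loss of $y$.
\item In Case 2, the identity $N_{B'}(T)=N_B(T)\setminus N_B(S_0)$ makes $N_B(T\cup S_0)$ the disjoint union of $N_B(S_0)$ and $N_{B'}(T)$. Both subproblems have strictly smaller $X$ because $\emptyset\neq S_0\subsetneq X$.
\end{itemize}

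The paper gives no proof of this statement. It quotes Hall's theorem as a classical result and uses it only as a tool in Lemma~\ref{lem2-2} and Lemma~\ref{lem4-2}, so there is no argument of the paper's to compare against. Your sketched alternative, via a maximum matching and alternating paths from an uncovered vertex, is also a valid standard route.
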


\begin{theorem}[\upshape\cite{Stanley}]\label{thm2-2}
For any graph $G$ of order $n$, we have $\rho^2(G)+\rho(G)\le2e(G)$, with equality if and only if $e(G)=\binom{k}{2}$ and $G\cong K_k\cup (n-k)K_1$ for some positive integer $k$.
\end{theorem}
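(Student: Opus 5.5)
The plan is to prove the inequality with a single Perron vector, combining Cauchy--Schwarz with a Rayleigh-quotient identity, and then to read off the equality case from the equality conditions of each step.

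\emph{Setup.} Let $x=(x_u)_{u\in V(G)}$ be a nonnegative unit eigenvector of $A(G)$ for $\rho=\rho(G)$; one exists by Perron--Frobenius. Choose $x$ supported on one connected component $H$ of $G$ with $\rho(H)=\rho$; on $H$ it is the positive Perron vector of $H$. For each vertex $u$ we have $\rho x_u=\sum_{v\in N_G(u)}x_v$.

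\emph{The inequality.} By Cauchy--Schwarz, writing $d_u=|N_G(u)|$,
\[
\rho^2x_u^2=\Big(\sum_{v\in N_G(u)}x_v\Big)^2\le d_u\sum_{v\in N_G(u)}x_v^2\le d_u\,(1-x_u^2).
\]
The second step uses $\|x\|=1$ and the fact that $u\notin N_G(u)$. Summing over $u$ gives
\[
\rho^2\le 2e(G)-\sum_u d_ux_u^2 .
\]
Next I rewrite the last sum over edges:
\[
\sum_u d_ux_u^2=\sum_{uv\in E(G)}(x_u^2+x_v^2)\ge \sum_{uv\in E(G)}2x_ux_v=x^TA(G)x=\rho .
\]
Combining the two displays yields $\rho^2+\rho\le 2e(G)$.

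\emph{Equality.} The converse direction is immediate: $K_k\cup(n-k)K_1$ has $\rho=k-1$ and $2e=k(k-1)=\rho^2+\rho$. For the forward direction, assume equality holds. Then every step above must be tight.
\begin{enumerate}[label=(\alph*)]
\item From $x_u^2+x_v^2=2x_ux_v$ we get $x_u=x_v$ on every edge. Hence $x$ is constant, say equal to $c>0$, on $H$.
\item For every vertex $u$ with $d_u>0$, tightness of $d_u\sum_{v\in N_G(u)}x_v^2\le d_u(1-x_u^2)$ forces $\sum_{v\notin N_G(u)\cup\{u\}}x_v^2=0$. So the support $V(H)$ is contained in $N_G(u)\cup\{u\}$.
\item Applying (b) to a non-isolated vertex $u\notin V(H)$ would require $V(H)\subseteq N_G(u)$, which is impossible since $H$ is a component. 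So every vertex outside $H$ is isolated.
\item Applying (b) to $u\in V(H)$ shows $u$ is adjacent to all other vertices of $H$. Hence $H\cong K_k$ for $k=|V(H)|$.
\end{enumerate}
Thus $G\cong K_k\cup(n-k)K_1$ and $e(G)=\binom{k}{2}$.

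The only delicate point is the equality analysis. One must track where the weight $d_u$ can vanish (isolated vertices impose no condition) and use the choice of $x$ supported on a single component, so that the Cauchy--Schwarz tightness condition translates cleanly into the clique structure. The inequality itself is routine once one notices the identity $\sum_u d_ux_u^2=\sum_{uv\in E(G)}(x_u^2+x_v^2)$.
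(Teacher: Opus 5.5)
Your proof is correct and complete. The paper gives no proof of this statement: it quotes it from Stanley and uses it as a black box. It needs the inequality in the proofs of Theorems~\ref{thm1-4}(ii) and~\ref{thm1-8}, to pass from $\rho>n-2$ to $e\ge\binom{n-1}{2}+1$, and the equality case in Corollary~\ref{cor1}, to force $G\cong K_{n-1}\cup K_1$ when $e(G)=\binom{n-1}{2}$.

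Your argument is the standard one, essentially Stanley's original proof. Row-wise Cauchy--Schwarz gives $\rho^2\le 2e(G)-\sum_u d_ux_u^2$, and then $\sum_u d_ux_u^2\ge x^{T}A(G)x=\rho$ because the difference is $\sum_{uv\in E(G)}(x_u-x_v)^2$.

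Your equality analysis is also sound. Every per-vertex slack is nonnegative, so each one must vanish. Choosing $x$ supported on a single component is exactly what lets (b)--(d) turn tightness into the clique-plus-isolated-vertices structure.

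Two minor points. First, step (a) is never used, since (b)--(d) alone give the structure. Second, in (d) note that (b) requires $d_u>0$. This holds for every $u\in V(H)$ unless $G$ is edgeless, and in that case $k=1$ and the conclusion is trivial.
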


\begin{corollary}\label{cor1}
Let $G$ be a graph of order $n$ satisfying $\rho(G)\ge n-2$. Then either
$G\cong K_{n-1}\cup K_1$ or $e(G)\ge \binom{n-1}{2}+1$.
\end{corollary}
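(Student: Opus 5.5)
We prove Corollary \ref{cor1} by applying Theorem \ref{thm2-2} to $G$ and splitting according to whether equality holds in Stanley's inequality.

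\emph{Case 1: equality holds.} Then $G\cong K_k\cup(n-k)K_1$ for some positive integer $k$, and $\rho(G)=k-1$ (for $k\ge 1$, with $\rho(K_1)=0$). The hypothesis $\rho(G)\ge n-2$ gives $k\ge n-1$.
\begin{itemize}
\item If $k=n-1$, then $G\cong K_{n-1}\cup K_1$, which is the first alternative.
\item If $k=n$, then $G\cong K_n$ and $e(G)=\binom{n}{2}$. Since $\binom{n}{2}-\binom{n-1}{2}=n-1\ge 1$ for $n\ge 2$, we get $e(G)\ge\binom{n-1}{2}+1$, the second alternative. The degenerate case $n=1$ is not of interest and can be set aside, or checked directly.
\end{itemize}

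\emph{Case 2: the inequality is strict.} Then
\[2e(G)>\rho^2(G)+\rho(G).\]
The function $x\mapsto x^2+x$ is increasing for $x\ge 0$. Assuming $n\ge 2$, we have $\rho(G)\ge n-2\ge 0$, so
\[\rho^2(G)+\rho(G)\ge (n-2)^2+(n-2)=(n-2)(n-1)=2\binom{n-1}{2}.\]
Combining the two displays gives $2e(G)>2\binom{n-1}{2}$.

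Since $e(G)$ and $\binom{n-1}{2}$ are integers, the strict inequality $e(G)>\binom{n-1}{2}$ upgrades to $e(G)\ge\binom{n-1}{2}+1$. This integrality step is the key point: strictness in Stanley's inequality is exactly what supplies the extra $+1$.

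The argument has no real obstacle. The only care needed is the monotonicity of $x^2+x$, which requires $\rho(G)\ge 0$, and the small cases $n\le 2$ in Case 1. For $n\ge 3$, which is the range used in the paper, both are immediate.
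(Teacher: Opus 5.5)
Your proof is correct and follows essentially the same route as the paper. Both arguments get $e(G)\ge\binom{n-1}{2}$ from Stanley's inequality (with integrality supplying the $+1$ when the inequality is strict) and use the equality characterization to pin down $K_{n-1}\cup K_1$; the only cosmetic difference is that the paper splits on whether $e(G)=\binom{n-1}{2}$, which forces equality and then $k=n-1$ from $\binom{k}{2}=\binom{n-1}{2}$, whereas you split on equality first and so need the extra harmless subcase $G\cong K_n$.
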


\begin{proof}
Since $\rho(G)\ge n-2$, we have $e(G)\ge \binom{n-1}{2}$ by Theorem \ref{thm2-2}, and thus $e(G)\ge \binom{n-1}{2}+1$ or $e(G)=\binom{n-1}{2}$. If $e(G)=\binom{n-1}{2}$, then $\rho^2(G)+\rho(G)=2e(G)$, which implies $G\cong K_{n-1}\cup K_1$ by Theorem \ref{thm2-2}. This completes the proof.
\end{proof}

We introduce some notation that will be used throughout the subsequent proofs.

Let $n\ge 3$ and $\mathbf{G}=\{G_1, \dots, G_{n}\}$ be a collection of not necessarily distinct $n$-vertex graphs with a common vertex set $V$.
Let $G=\bigcup\limits_{i=1}^n G_i$ be a simple graph with $V(G)=V$ and $E(G)=\bigcup\limits_{i=1}^nE(G_i)$, and let $E(\overline{G})=E(K_n)\setminus E(G)=\bigcap\limits_{i=1}^n E(\overline{G_i})$ with $|E(\overline{G})|=m$.
For $3\le\ell\le n$, let $\mathcal{C}_\ell(G)$ be the set of all $\ell$-cycles in $G$ with $|\mathcal{C}_\ell(G)|=N_\ell$.
For an edge $e\in E(G)$, let $N_{e,\ell}=|\{C\in\mathcal{C}_\ell(G):e\in E(C)\}|$.

\begin{lemma}\label{lem2-1}
Suppose $N_\ell>0$ and $m\le n-3$. Then $N_{e,\ell}\le \frac{2N_\ell}{n-1-m}$ for every $e\in E(G)$.
\end{lemma}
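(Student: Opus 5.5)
The plan is to bound $N_{e,\ell}$ by comparing it with $N_{f,\ell}$ for the edges $f$ sharing an endpoint with $e$. The key fact is that the edges at a fixed vertex cannot carry too many cycles in total. Write $e=uv$. Every $\ell$-cycle through $u$ uses exactly two edges at $u$, so
\[
\sum_{w\in V\setminus\{u\}} N_{uw,\ell}\le 2N_\ell ,
\]
where we put $N_{uw,\ell}=0$ if $uw\notin E(G)$. It therefore suffices to show that the $n-1$ terms on the left are, on average, almost as large as $N_{uv,\ell}$. Precisely, the goal is
\[
\sum_{w\ne u} N_{uw,\ell}\ \ge\ (n-1-m)\,N_{uv,\ell},
\]
which together with $m\le n-3$ (so that $n-1-m\ge 2>0$) gives the lemma after dividing.

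To compare $N_{uw,\ell}$ with $N_{uv,\ell}$ for $w\ne u,v$, I will use the vertex transposition $\tau_w=(v\,w)$ acting on $V$. It induces a bijection on the $\ell$-cycles of $K_n$, and it sends any cycle $C$ containing $uv$ to a cycle $\tau_w(C)$ containing $uw$. The cycle $\tau_w(C)$ lies in $G$ unless it contains one of the $m$ non-edges of $G$. Call the pair $(C,w)$ \emph{bad} in that case. Injectivity of $\tau_w$ then gives
\[
N_{uw,\ell}\ge N_{uv,\ell}-b_w, \qquad b_w=\#\{C\ni uv:\ (C,w)\text{ bad}\}.
\]
Summing over the $n-2$ choices of $w$ and adding the term $w=v$ yields
\[
\sum_{w\ne u}N_{uw,\ell}\ge (n-1)N_{uv,\ell}-\sum_w b_w .
\]
So everything reduces to the estimate $\sum_w b_w\le m\,N_{uv,\ell}$. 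Equivalently, for each fixed cycle $C\ni uv$ and each non-edge $xy$ of $G$, at most one (on average) $w$ should make $\tau_w(C)$ contain $xy$.

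The main obstacle is this counting step. A non-edge $xy$ lies in $\tau_w(C)$ only if its preimage $\tau_w(xy)$ is an edge of $C$ different from $xy$. This forces $\{x,y\}\cap\{v,w\}\ne\emptyset$, and the preimage is an edge of $C$ at $v$ or at $w$. Since $C$ has only two edges at each vertex, a naive count gives up to two values of $w$ per non-edge, and hence only the weaker bound $2m$.

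To remove the factor $2$, I will use the structure of $C$ near $v$. Write $C=u\,v\,v'\cdots$.
\begin{itemize}
\item If $v\in\{x,y\}$, say $x=v$, then $w$ must be a $C$-neighbour of $y$ with $y\notin\{u,v'\}$, since $vu$ and $vv'$ are edges of $G$.
\item If $v\notin\{x,y\}$, then $w\in\{x,y\}$ and the other endpoint must be a $C$-neighbour of $v$, that is, $u$ or $v'$. Here $w=x$ is determined by $y$.
\end{itemize}
I expect to close the gap in one of two ways. The first is to handle the two $C$-neighbours of $y$ by charging one of them to the analogous map at the other endpoint, i.e.\ running the same argument with the roles of $u$ and $v$ exchanged and averaging the two inequalities. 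The second is to use the slack $N_{uv,\ell}$ from the $w=v$ term together with the fact that a bad $w$ counted twice forces two distinct edges of $C$ to map onto the same non-edge. If the exact constant does not come out, the fallback is to double-count pairs (bad $(C,w)$, non-edge) directly, and to use that the map $C\mapsto\tau_w(C)$ is a bijection to bound the number of $C$ that are bad for a given $w$ and non-edge.
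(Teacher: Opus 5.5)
There is a genuine gap: the step you reduce everything to, $\sum_w b_w\le mN_{uv,\ell}$, is false for the transposition $\tau_w=(v\,w)$. Your degree count $\sum_{w\ne u}N_{uw,\ell}\le 2N_\ell$ is correct, and so is the idea of comparing $N_{uw,\ell}$ with $N_{uv,\ell}$ through an injection. But none of your proposed repairs can close this step.

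\textbf{Why the step fails.} For $w\in V(C)$, the map $\tau_w$ in general creates four new edges: $uw$, $wv'$, and the two edges joining $v$ to the old $C$-neighbours of $w$. So a non-edge $vy$ is created for both $C$-neighbours $w$ of $y$, and this double count really occurs.

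Take $G=K_5-vy$, $\ell=5$, $e=uv$. Here $N_{uv,5}=4$ and $N_5=6$, so the lemma holds with equality. Yet $\sum_w b_w=6>4=mN_{uv,5}$, because the cycles $u\,v\,a\,y\,b\,u$ and $u\,v\,b\,y\,a\,u$ each have two bad $w$. Your chain then gives only $\sum_{w\ne u}N_{uw,5}\ge 16-6=10$, while the true value is $2N_5=12$.

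The loss is in the term you discard, $N_{uw,\ell}-(N_{uv,\ell}-b_w)$. It counts cycles through $uv$ that are not in $G$ but whose image is. There is no slack in the $w=v$ term, since it is already used at full value.

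Exchanging or averaging the roles of $u$ and $v$ does not help. In $K_7-\{vy,ux\}$ with $\ell=7$, you get $N_{uv,7}=78$ and $\sum_w b_w=174>156=mN_{uv,7}$. The automorphism $(u\,v)(x\,y)$ gives the same numbers for transpositions at $u$, so the average fails too.

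\textbf{The missing idea.} Use a move that creates only two new edges. Write $C=u\,v\,x_2\cdots x_{\ell-1}\,u$ and define $\sigma_w(C)$ as follows.
\begin{itemize}
\item If $w=x_j$ with $2\le j\le \ell-2$, reverse the segment from $v$ to $x_j$, giving $u\,x_j\,x_{j-1}\cdots x_2\,v\,x_{j+1}\cdots x_{\ell-1}\,u$. The new edges are $uw$ and $vx_{j+1}$.
\item If $w=x_{\ell-1}$, set $\sigma_w(C)=C$.
\item If $w\notin V(C)$, replace $v$ by $w$. The new edges are $uw$ and $wx_2$.
\end{itemize}

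For fixed $w$ this map is still injective. An image contains $uv$ exactly in the identity case. Otherwise you recover $C$ by putting $v$ back in place of $w$ if $v$ is absent, or else by reversing the segment of $\sigma_w(C)$ from $w$ to $v$.

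For fixed $C$, the $2(n-3)$ new edges are pairwise distinct, so each non-edge spoils at most one $w$. This gives exactly $\sum_w b_w\le mN_{uv,\ell}$, and your degree count then yields the lemma.

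These moves are the paper's Operations I and II. The paper finishes differently: it shows that each $\ell$-cycle avoiding $e$ arises from at most two cycles through $e$, where you would use $\sum_{w\ne u}N_{uw,\ell}\le 2N_\ell$.
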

\begin{proof}
Fix $e=x_0x_1\in E(G)$. The number of $\ell$-cycles not containing $e$ in $G$ is $N_\ell-N_{e,\ell}$, for which we now give a lower bound.

Firstly, for any $C=x_0x_1x_2\cdots x_{\ell-1}x_0\in \mathcal{C}_\ell(G)$, we define two types of cycle-switching operations as follows.

\vspace{6pt}
\noindent\textbf{Operation I:} The vertex set $V(C)$ remains unchanged. For each $j\in\{2,\ldots,\ell-2\}$, delete $x_0x_1$, $x_jx_{j+1}$, and add $x_0x_j$, $x_1x_{j+1}$.
\vspace{6pt}

\vspace{6pt}
\noindent\textbf{Operation II:} Replace $x_1$ with a vertex not belonging to $V(C)$. For each $z\in V\setminus V(C)$, delete $x_0x_1$, $x_1x_2$, and add $x_0z$, $zx_2$.
\vspace{6pt}

Secondly, we prove that for any $\ell$-cycle $C\in\mathcal{C}_\ell(G)$ with $e\in E(C)$, there exist at least $n-3-m$ cycle-switching operations such that each operation yields an $\ell$-cycle in $G$ that does not contain the edge $e$.

For Operation I, if the two added new edges $x_0x_j,x_1x_{j+1}\in E(G)$, then the result is the $\ell$-cycle $x_0x_jx_{j-1}\cdots x_1x_{j+1}x_{j+2}\cdots x_{\ell-1}x_0\in \mathcal{C}_\ell(G)$, which does not contain $e$. There are $\ell-3$ possible cycle-switching operations.

For Operation II, if the two added new edges $x_0z,zx_2\in E(G)$, the result is the $\ell$-cycle $x_0zx_2x_3\cdots x_{\ell-1}x_0\in \mathcal{C}_\ell(G)$ avoiding $e$. There are $n-\ell$ possible cycle-switching operations.

Thus there exist $(\ell-3)+(n-\ell)=n-3$ possible cycle-switching operations. Note that all edges added by these operations are pairwise distinct, which implies each edge in $E(\overline{G})$ can invalidate at most one cycle-switching operation. It follows that $C$ admits at least $n-3-m$ cycle-switching operations satisfying the requirement.

Thirdly, we prove that any $\ell$-cycle $C'\in\mathcal{C}_\ell(G)$ with $e\notin E(C')$ can be obtained from at most two $\ell$-cycles containing $e$ by cycle-switching operations I and II.

If $x_1\in V(C')$, then either $C'$ is obtained by Operation I, or it cannot be obtained by any of the cycle-switching operations I and II. Assume $C'$ arises from Operation I. There are two paths between $x_0$ and $x_1$ in $C'$, denoted by $P=x_0y_1\cdots y_rx_1$ and $Q=x_0z_1\cdots z_sx_1$. Then $C'$ can be obtained via Operation I from $C''=C'-\{x_0y_1,x_1z_s\}+\{x_0x_1,y_1z_s\}$ or $C'''=C'-\{x_0z_1,x_1y_r\}+\{x_0x_1,z_1y_r\}$.

If $x_1\notin V(C')$, then either $C'$ is obtained by Operation II, or it cannot be obtained by any of the cycle-switching operations I and II. Assume $C'$ arises from Operation II, and $N_{C'}(x_0)=\{u,v\}$. Then $C'$ can be obtained via Operation II from the cycle constructed by replacing $u$ or $v$ with $x_1$.

Finally, combining the above arguments, $G$ contains at least $\frac{(n-3-m)N_{e,\ell}}{2}$ $\ell$-cycles not containing $e$. It follows that $N_\ell-N_{e,\ell}\ge \frac{(n-3-m)N_{e,\ell}}{2}$, and thus $N_{e,\ell}\le \frac{2N_\ell}{n-1-m}$.

This completes the proof.
\end{proof}

\begin{lemma}\label{lem2-2}
If $e(G_i)\ge \binom{n-1}{2}+1$ for each $i\in [n]$, then $m\le n-2$, and $G$ is pancyclic if and only if $\mathbf{G}$ is rainbow pancyclic.
\end{lemma}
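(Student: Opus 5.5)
We prove the two assertions separately; the substantive direction is that pancyclicity of $G$ gives rainbow pancyclicity of $\mathbf{G}$, and we handle it by a weighted double count of pairs $(C,i)$ that rules out a Hall violator on some $\ell$-cycle $C$.

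\textbf{The bound on $m$.} Fix any $i$. Since $E(\overline{G})\subseteq E(\overline{G_i})$, we get $m\le |E(\overline{G_i})|=\binom n2-e(G_i)\le \binom n2-\binom{n-1}2-1=n-2$. More precisely, $G_i$ misses at most $n-2-m$ edges of $G$, because its complement already contains the $m$ edges of $\overline G$.

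\textbf{Rainbow pancyclic implies $G$ pancyclic.} Every edge of a rainbow cycle lies in some $G_i\subseteq G$, so every rainbow cycle is a cycle of $G$.

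\textbf{$G$ pancyclic implies rainbow pancyclic.} Fix $3\le \ell\le n$, so that $N_\ell>0$.

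If $m=n-2$, each $G_i$ misses no edge of $G$, so $G_i=G$ for all $i$. Any $\ell$-cycle of $G$ has $\ell\le n$ edges, and we colour them with distinct indices.

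Now assume $m\le n-3$, so Lemma \ref{lem2-1} applies. For an $\ell$-cycle $C$ of $G$, form the bipartite graph $B_C$ between $E(C)$ and $[n]$, joining $e$ to $i$ when $e\in E(G_i)$. $C$ is rainbow exactly when $B_C$ has a matching covering $E(C)$. Call $C$ \emph{bad} if it is not rainbow. By Theorem \ref{thm2-1}, a bad $C$ has some $S\subseteq E(C)$ with $|S|=s$ and $|N_{B_C}(S)|\le s-1$. Then at least $n-s+1$ indices $i$ have $G_i$ missing every edge of $S$.

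We bound $s$ as follows.
\begin{itemize}[label=--]
\item $s\ne 1$: an edge of $G$ lies in some $G_i$, so no single edge is missed by all $n$ graphs.
\item $s\le n-2-m\le n-2$: each such $G_i$ misses all $s$ edges of $S$, and $G_i$ misses at most $n-2-m$ edges of $G$.
\end{itemize}
Hence $2\le s\le n-2$. Define $w(C)=\sum_i |E(C)\setminus E(G_i)|$. Each of the at least $n-s+1$ indices above contributes at least $s$, so
\[
w(C)\ \ge\ s(n-s+1)\ \ge\ 2(n-1).
\]
The last step holds because $s(n-s+1)$ is concave in $s$ and equals $2(n-1)$ at both endpoints $s=2$ and $s=n-1$.

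\textbf{Double count.} Sum $w(C)$ over all $C\in\mathcal{C}_\ell(G)$. For each $i$, the sum counts pairs (missing edge $e$ of $G_i$ in $G$, cycle through $e$). There are at most $n-2-m$ such edges, and by Lemma \ref{lem2-1} each lies on at most $\frac{2N_\ell}{n-1-m}$ cycles. Therefore
\[
\sum_{C} w(C)\le \frac{2n(n-2-m)N_\ell}{n-1-m},
\qquad
\#\{\text{bad } C\}\le \frac{n(n-2-m)}{(n-1)(n-1-m)}\,N_\ell .
\]
The coefficient is less than $1$, since $n(n-2-m)<(n-1)(n-1-m)$ reduces to $0<1+m$. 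So some $\ell$-cycle is not bad, i.e.\ it is rainbow. This holds for every $\ell$, so $\mathbf{G}$ is rainbow pancyclic.

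The key step is the weight inequality $w(C)\ge 2(n-1)$ for bad cycles. It needs both bounds on $s$: excluding $s=1$ and excluding $s\ge n-1$, the latter via the at most $n-2-m$ missing edges per $G_i$. With only the crude bound $w(C)\ge n$, the count gives fewer than $2N_\ell$ bad cycles, which is not enough.
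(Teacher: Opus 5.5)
Your proof is correct and is essentially the paper's argument: the same split into $m=n-2$ versus $m\le n-3$, the same weight $\omega(C)=\sum_{e\in E(C)}\omega(e)$ bounded via Lemma~\ref{lem2-1}, and the same Hall-violator estimate $|S|(n-|S|+1)\ge 2(n-1)$ for $2\le |S|\le n-2$. The only difference is framing: the paper averages to find one cycle of weight $<2(n-1)$ and shows it is rainbow, whereas you show every non-rainbow cycle has weight $\ge 2(n-1)$ and count such cycles (with the slightly sharper bound $|S|\le n-2-m$), which is the contrapositive of the same step.
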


\begin{proof}
Since $e(G_i)\ge \binom{n-1}{2}+1$ for each $i\in [n]$, we have $e(\overline{G_i})\le n-2$. Recalling that $E(\overline{G})=E(K_n)\setminus E(G)=\bigcap\limits_{i=1}^n E(\overline{G_i})$ with $|E(\overline{G})|=m$, we have
$E(\overline{G})\subseteq E(\overline{G_i})$, and thus $m\le n-2$.

Now we show that $G$ is pancyclic if and only if $\mathbf{G}$ is rainbow pancyclic. The sufficiency is trivial, so it suffices to prove the necessity. Fix $\ell\in\{3,\dots,n\}$ and suppose $G$ contains an $\ell$-cycle. We only need to show there exists a rainbow $\ell$-cycle in $\mathbf{G}$.

For each $e\in E(G)$, we define $\omega(e)=|\{i\in[n]:e\notin E(G_i)\}|$. Then
\begin{equation}\label{e1}
\begin{aligned}
\sum_{e\in E(G)}\omega(e)
&=\sum_{i=1}^{n}|E(G)\setminus E(G_i)|\\
&=\sum_{i=1}^{n}|E(K_n)\setminus(E(\overline{G})\cup E(G_i))|\\
&=\sum_{i=1}^{n}|E(\overline{G_i})\setminus E(\overline{G})|\\
&\le n(n-2-m).
\end{aligned}
\end{equation}

%\begin{equation}\tag{1}\label{e1}
%\sum_{e\in E(G)}\omega(e)=\sum_{i=1}^n|E(\overline{G_i})\setminus D|\le n(n-2-d).
%\end{equation}

If $m=n-2$, then $E(\overline{G_i})=E(\overline{G})$ for each $i\in [n]$, and thus $G_1=\cdots=G_n=G$. Therefore, every cycle in $G$ is rainbow in $\mathbf{G}$.

If $m\le n-3$, for any $C\in\mathcal{C}_\ell(G)$, let $\omega(C)=\sum\limits_{e\in E(C)}\omega(e)$. Then
\begin{equation*}
\sum_{C\in\mathcal{C}_\ell(G)}\omega(C)=\sum_{C\in\mathcal{C}_\ell(G)}\sum_{e\in E(C)}\omega(e)=\sum_{e\in E(G)}\omega(e)|\{C\in\mathcal{C}_\ell(G):e\in E(C)\}|=\sum_{e\in E(G)}\omega(e)N_{e,\ell}.
\end{equation*}
%\begin{align*}
%   \sum_{C\in\mathcal{C}_\ell(G)}\omega(C)
%      &=\sum_{C\in\mathcal{C}_\ell(G)}\sum_{e\in E(C)}\omega(e)\\
%      &=\sum_{e\in E(G)}\omega(e)|C\in\mathcal{C}_\ell(G):e\in E(C)|\\
%      &=\sum_{e\in E(G)}\omega(e)N_{e,\ell}.
%\end{align*}
By \eqref{e1}, Lemma \ref{lem2-1} and $2(n-1)-\frac{2n(n-2-m)}{n-1-m}=\frac{2(m+1)}{n-1-m}>0$, we have
\begin{equation}\label{2}
   \sum_{C\in\mathcal{C}_\ell(G)}\omega(C)\le \frac{2N_\ell}{n-1-m}\sum_{e\in E(G)}\omega(e)\le \frac{2n(n-2-m)}{n-1-m}N_\ell<2(n-1)N_\ell.
\end{equation}
Since $|\mathcal{C}_\ell(G)|=N_\ell$, \eqref{2} implies that there is a cycle $C'\in\mathcal{C}_\ell(G)$ such that
\begin{equation}\label{e2}
\omega(C')<2(n-1).
\end{equation}

Let $E(C')=\{f_1,f_2,\dots,f_\ell\}$. We construct a bipartite graph $B$ with bipartition $(E(C'),[n])$ such that $V(B)=E(C')\cup [n]$ and $fi\in E(B)$ if and only if $f\in E(G_i)$, where $f\in E(C')$ and $i\in[n]$. Since $E(C')\subseteq E(G)$, we have $d_B(f)\ge 1$ for any $f\in E(C')$.
Clearly, a matching in $B$ which covers all vertices in $\{f_1,f_2,\dots,f_\ell\}$ corresponds to a rainbow coloring of $C'$.

If there is no such matching in $B$, by Theorem \ref{thm2-1}, there exists a nonempty set $S\subseteq E(C')$ such that $|N_B(S)|\le |S|-1$. It follows that $|S|\geq2$ since $d_B(f)\ge 1$ for any $f\in E(C')$ implies $|N_B(S)|\ge 1$.
Let $Q=[n]\setminus N_B(S)$. Then $|Q|\ge n-|S|+1$.

On the other hand, for each $f\in S\subseteq E(C')$ and each $i\in Q$, we have $fi\notin E(B)$, say, $f\in E(\overline{G_i})$, thus $S\subseteq E(\overline{G_i})$ for each $i\in Q$, which implies $\omega(C')\ge |S||Q|$, and $|S|\le n-2$ since $e(\overline{G_i})\le n-2$ for each $i\in Q$.
Therefore,
\[\omega(C')\ge |S||Q|\ge |S|(n-|S|+1)=2(n-1)+(|S|-2)(n-|S|-1)\ge2(n-1),\]
contradicting \eqref{e2}. Thus the bipartite graph $B$ has a matching which covers all vertices in $\{f_1,f_2,\dots,f_\ell\}$, which implies $C'\in \mathcal{C}_\ell(G)$ is rainbow. It follows that $\mathbf{G}$ contains a rainbow $\ell$-cycle, and thus $\mathbf{G}$ is rainbow pancyclic.
\end{proof}

We now prove Theorems \ref{thm1-7} and \ref{thm1-8}.

\begin{proof}[\noindent\textbf{Proof of Theorem \ref{thm1-7}}]
Recalling that $G=\bigcup\limits_{i=1}^n G_i$, then $e(G)\ge e(G_i)\ge \binom{n-1}{2}+1$. By Theorem \ref{thm1-4}, $G$ is pancyclic unless $G\in\{K_1\vee(K_{n-2}\cup K_1),K_2\vee 3K_1,K_{2,2}\}$.

If $G$ is pancyclic, then $\mathbf{G}$ is rainbow pancyclic by Lemma \ref{lem2-2}.

If $G\in\{K_1\vee(K_{n-2}\cup K_1),K_2\vee 3K_1,K_{2,2}\}$, then $e(G)=\binom{n-1}{2}+1$. Since $G_i\subseteq G$ and $e(G_i)\ge \binom{n-1}{2}+1$, we have $G_i=G$ for each $i\in [n]$, which implies that $G_1=G_2=\dots=G_n\cong K_1\vee(K_{n-2}\cup K_1)$ or $G_1=G_2=\dots=G_5\cong K_2\vee 3K_1$ with $n=5$ or $G_1=G_2=G_3=G_4\cong K_{2,2}$ with $n=4$. Clearly, $G$ is not pancyclic, and thus $\mathbf{G}$ is not rainbow pancyclic.

Therefore, the proof of Theorem \ref{thm1-7} is completed.
\end{proof}

\begin{proof}[\noindent\textbf{Proof of Theorem \ref{thm1-8}}]
Since $\rho(G_i)> n-2$ for each $i\in [n]$, by Theorem \ref{thm2-2}, we have $e(G_i)\ge \binom{n-1}{2}+1$ for each $i\in [n]$. By Theorem \ref{thm1-7}, $\mathbf{G}$ is rainbow pancyclic unless one of the following holds:
(i) $G_1=G_2=\dots=G_n\cong K_1\vee(K_{n-2}\cup K_1)$; (ii) $n=5$ and $G_1=G_2=\dots=G_5\cong K_2\vee 3K_1$; (iii) $n=4$ and $G_1=G_2=G_3=G_4\cong K_{2,2}$.

Clearly, $\rho(K_1\vee(K_{n-2}\cup K_1))>\rho(K_{n-1}\cup K_1)=n-2$, $\rho(K_2\vee 3K_1)=3=n-2$ and $\rho(K_{2,2})=2=n-2$. Therefore, if $\rho(G_i)> n-2$ for each $i\in [n]$, then $\mathbf{G}$ is rainbow pancyclic unless $G_1=G_2=\dots=G_n\cong K_1\vee(K_{n-2}\cup K_1)$.
\end{proof}

\section{Proof of Theorem \ref{thm1-9}}
In this section, we show Theorem \ref{thm1-9}.

Let $\mathbf{G}$, $G$, $E(\overline{G})$, $m$, $\mathcal{C}_\ell(G)$, and $N_\ell$ be as defined in Section 2.
Since $\rho(G_i)\geq n-2$ for each $i\in [n]$, by Corollary \ref{cor1}, we have $G_i\cong K_{n-1}\cup K_1$ or $e(G_i)\ge \binom{n-1}{2}+1$ for any $i\in[n]$.
Now we define the sets $R$ and $T$ as follows:
\[
R=\{i\in [n]: G_i\cong K_{n-1}\cup K_1\},\quad T=[n]\setminus R.
\]
Let $|R|=r$ and $|T|=t$. Then $r+t=n$.

Clearly, for each $i\in T$, $e(G_i)\ge\binom{n-1}{2}+1$ and $e(\overline{G_i})\le n-2$. Recalling the definition of $F_v$, for each $i\in R$, there exists a vertex $v\in V$ such that $G_i=F_v$, for convenience, we write $v=v_i$. In particular, if $G_i=G_j$ for $i,j\in R$, we set $v_i=v_j$.

For each $e\in E(G)$, we define $\omega_T(e)=|\{i\in T:e\notin E(G_i)\}|$. For a subgraph $H$ of $G$, let $ \omega_T(H)=\sum\limits_{e\in E(H)}\omega_T(e)$.
Similar to the proof of Lemma \ref{lem2-2}, we have the following results.

\begin{lemma}\label{lem4-1}
Let $t>0$ and $N_\ell>0$ for $3\le \ell\le n$. Then $m\le n-2$, and there exists a cycle $C\in\mathcal{C}_\ell(G)$ such that $\omega_T(C)<2t$.
\end{lemma}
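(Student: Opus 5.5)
The plan is to mimic the counting argument in the proof of Lemma \ref{lem2-2}, restricted to the graphs indexed by $T$. First I will establish $m\le n-2$. Since $t>0$, pick some $j\in T$; then $e(\overline{G_j})\le n-2$. Because $E(\overline{G})=\bigcap_{i=1}^n E(\overline{G_i})\subseteq E(\overline{G_j})$, we get $m\le n-2$.

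Next I will bound the total weight. For each $i\in T$ we have $E(\overline{G})\subseteq E(\overline{G_i})$, so
\[\sum_{e\in E(G)}\omega_T(e)=\sum_{i\in T}|E(\overline{G_i})\setminus E(\overline{G})|\le t(n-2-m).\]
Now split into two cases.

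\textbf{Case $m=n-2$.} Every $i\in T$ satisfies $E(\overline{G_i})=E(\overline{G})$, so $\omega_T\equiv 0$ on $E(G)$. Since $N_\ell>0$, any $C\in\mathcal{C}_\ell(G)$ has $\omega_T(C)=0<2t$.

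\textbf{Case $m\le n-3$.} Lemma \ref{lem2-1} applies and gives $N_{e,\ell}\le \frac{2N_\ell}{n-1-m}$ for every edge $e$. Double counting as before gives
\[\sum_{C\in\mathcal{C}_\ell(G)}\omega_T(C)=\sum_{e\in E(G)}\omega_T(e)N_{e,\ell}\le \frac{2N_\ell}{n-1-m}\,t(n-2-m).\]
The right-hand side is strictly less than $2tN_\ell$, because $\frac{n-2-m}{n-1-m}<1$ and $t>0$. Averaging over the $N_\ell$ cycles then yields some $C$ with $\omega_T(C)<2t$.

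The only delicate point is checking that Lemma \ref{lem2-1} is still valid here. Its proof uses only $m$ and the structure of $G$, not the edge counts of the individual $G_i$, so the presence of the graphs $F_v$ with $i\in R$ is harmless. The hypothesis $m\le n-3$ needed by Lemma \ref{lem2-1} is exactly the second case. With that checked, the argument is routine.
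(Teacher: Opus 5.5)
Your proposal is correct and follows the paper's proof essentially step for step. You bound $m\le e(\overline{G_j})\le n-2$ for some $j\in T$, then prove $\sum_{e}\omega_T(e)\le t(n-2-m)$, dispose of the trivial case $m=n-2$, and finish the case $m\le n-3$ with double counting via Lemma~\ref{lem2-1} and averaging. Your check that Lemma~\ref{lem2-1} depends only on $G$ and $m$ is also right; it is exactly why the paper can apply that lemma unchanged even though the graphs indexed by $R$ have fewer edges.
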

\begin{proof}
Clearly, $T\neq\emptyset$ as $t>0$.
Recalling that $E(\overline{G})=E(K_n)\setminus E(G)=\bigcap\limits_{i=1}^n E(\overline{G_i})$ with $|E(\overline{G})|=m$, we have $E(\overline{G})\subseteq E(\overline{G_i})$ for any $i\in T$, and thus $m\le e(\overline{G_i})\le n-2$.

By the definition of $\omega_T(e)$ and $|T|=t$, we have
\begin{equation}\label{e3}
\sum_{e\in E(G)}\omega_T(e)=\sum_{i\in T}|E(\overline{G_i})\setminus E(\overline{G})|\le t(n-2-m).
\end{equation}

%\begin{equation}\label{e3}
%\begin{aligned}
%\sum_{e\in E(G)}\omega_T(e)
%&=\sum_{i\in T}|E(G)\setminus E(G_i)|\\
%&=\sum_{i\in T}|E(K_n)\setminus (D\cup E(G_i))|\\
%&=\sum_{i\in T}(e(\overline{G_i})-d)\\
%&\le t(n-2-d).
%\end{aligned}
%\end{equation}

If $m=n-2$, then $e(\overline{G_i})=n-2$ and $G_i=G$ for each $i\in T$,
$\omega_T(C)=\sum\limits_{e\in E(C)}\omega_T(e)\le \sum\limits_{e\in E(G)}\omega_T(e)\le 0<2t$ by \eqref{e3} for any cycle $C$ in $G$, and thus the conclusion follows.

%for every $i\in T$, $E(\overline{G_i})=D$, which implies $G_i=G$. Thus $\omega_T(e)=0$ for each $e\in E(G)$, so the conclusion follows.

If $m\le n-3$, by Lemma \ref{lem2-1} and \eqref{e3}, we have
\[
\begin{aligned}
\sum_{C\in\mathcal{C}_\ell(G)}\omega_T(C)
&=\sum_{C\in\mathcal{C}_\ell(G)}\sum_{e\in E(C)}\omega_T(e)
=\sum_{e\in E(G)}\omega_T(e)|\{C\in\mathcal{C}_\ell(G): e\in E(C)\}|\\
&=\sum_{e\in E(G)}\omega_T(e)N_{e,\ell}
\le\frac{2N_\ell}{n-1-m}\sum_{e\in E(G)}\omega_T(e)
\le\frac{2t(n-2-m)}{n-1-m}N_\ell\\
&<2tN_\ell.
\end{aligned}
\]
%\begin{align*}
%   \sum_{C\in\mathcal{C}_\ell(G)}\omega_T(C)
%      &=\sum_{C\in\mathcal{C}_\ell(G)}\sum_{e\in E(C)}\omega_T(e)\\
%      &=\sum_{e\in E(G)}\omega_T(e)|\{C\in\mathcal{C}_\ell(G): e\in E(C)\}|\\
%      &=\sum_{e\in E(G)}\omega_T(e)N_{e,\ell}\\
%      &\le\frac{2N_\ell}{n-1-m}\sum_{e\in E(G)}\omega_T(e)\\
%      &\le\frac{2t(n-2-m)}{n-1-m}N_\ell\\
%      &<2tN_\ell\\
%      &=2t|\mathcal{C}_\ell(G)|.
%\end{align*}
It follows that there exists a cycle $C\in\mathcal{C}_\ell(G)$ such that $\omega_T(C)<2t$.

This completes the proof.
\end{proof}

The following basic observation follows easily from Hall's theorem (i.e. Theorem \ref{thm2-1}), and we omit its proof.

\begin{observation}\label{ob4-1}
Let $0< a\le b$. If a bipartite graph is obtained from $K_{a,b}$ by deleting at most one edge, then it has a matching covering all vertices in the partition of size $a$, except when $a=b=1$ and the only edge is removed.
\end{observation}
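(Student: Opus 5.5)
The statement to prove is Observation~\ref{ob4-1}. Let the bipartite graph be $B$, with parts $X$ and $Y$, $|X|=a\le b=|Y|$. Assume $B$ is obtained from $K_{a,b}$ by deleting at most one edge $xy$, with $x\in X$ and $y\in Y$. By Theorem~\ref{thm2-1}, it suffices to check Hall's condition $|N_B(S)|\ge |S|$ for every nonempty $S\subseteq X$. I will split according to the size of $S$.

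\emph{Case $|S|\ge 2$.} Each vertex of $Y$ loses at most one neighbour, namely $y$ may lose $x$. Since $S$ contains some vertex other than $x$, every vertex of $Y$ has a neighbour in $S$. Hence $N_B(S)=Y$, and $|N_B(S)|=b\ge a\ge |S|$.

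\emph{Case $S=\{u\}$.} Then $|N_B(S)|=d_B(u)\ge b-1$. If $b\ge 2$, this gives $|N_B(S)|\ge 1=|S|$. If $b=1$, then $a=1$ as well. In that case Hall's condition fails only when the unique edge has been deleted, which is exactly the stated exception.

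Hence Hall's condition holds in every non-exceptional situation, and Theorem~\ref{thm2-1} yields a matching covering $X$. Nothing here is a real obstacle; the only point needing care is to isolate the degenerate case $a=b=1$ in the singleton step.
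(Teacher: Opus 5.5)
Your proof is correct and follows the approach the paper intends. The paper omits the proof and says only that the observation follows easily from Hall's theorem (Theorem~\ref{thm2-1}); your split into $|S|\ge 2$ (where $N_B(S)=Y$) and $|S|=1$ (where the degenerate case $a=b=1$ appears) is a complete version of that check.
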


Next, we give a key lemma.
\begin{lemma}\label{lem4-2}
Let $\mathbf{G}$ be a graph collection satisfying the conditions of Theorem \ref{thm1-9}. If $N_\ell>0$ for $3\le\ell\le n$, then one of the following holds:
\begin{enumerate}[label=(\roman*), font=\upshape, itemsep=2pt, align=left, leftmargin=1em]
 \item $\mathbf{G}$ contains a rainbow $\ell$-cycle;
 \item $\ell=n$ and there exists a vertex $v\in V$ such that $|\{i\in[n]:G_i=F_v\}|=n-1$.
\end{enumerate}
\end{lemma}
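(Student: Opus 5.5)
The plan is to repeat the Hall-type argument of Lemma \ref{lem2-2}, now applied to the two kinds of colours separately. For a cycle $C\in\mathcal{C}_\ell(G)$ let $B_C$ be the bipartite graph with parts $E(C)$ and $[n]$, where $fi\in E(B_C)$ iff $f\in E(G_i)$. A matching of $B_C$ covering $E(C)$ is exactly a rainbow colouring of $C$. Colours in $R$ are very transparent: for $i\in R$ we have $f\in E(G_i)=E(F_{v_i})$ iff $v_i\notin f$. For $w\in V$ let $\mu(w)=|\{i\in R: v_i=w\}|$, so that $\sum_w\mu(w)=r$. If $S\subseteq E(C)$ violates Hall's condition, put $Q=[n]\setminus N_{B_C}(S)$, $Q_T=Q\cap T$ and $Q_R=Q\cap R$.
\begin{itemize}
\item An index $i\in Q_R$ needs $v_i$ to lie on every edge of $S$. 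On a cycle this forces $|S|\le 2$, and if $|S|=2$ the two edges of $S$ must share a vertex $w$ with $v_i=w$.
\item An index $i\in Q_T$ gives $S\subseteq E(\overline{G_i})$, so $\omega_T(C)\ge |S||Q_T|$ and $|S|\le n-2$ whenever $Q_T\neq\emptyset$.
\end{itemize}

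\textbf{Case $t>0$.} Take $C$ from Lemma \ref{lem4-1}, so $\omega_T(C)<2t$. Suppose $B_C$ has no matching covering $E(C)$ and let $S$ be a Hall violator. Then $|Q|\ge n-|S|+1$.
\begin{itemize}
\item If $|S|\ge3$, then $Q_R=\emptyset$. Hence $|Q_T|\ge n-|S|+1$, which forces $|S|\ge n-t+1$. We get $\omega_T(C)\ge |S|(n-|S|+1)\ge 2(n-1)\ge 2t$, exactly the computation of Lemma \ref{lem2-2}. This is a contradiction.
\item If $|S|=2$, then $|Q|\ge n-1$. Only one colour $i_0$ can see $S$, and every $i\in R$ other than possibly $i_0$ has $v_i=w$, the common vertex of the two edges. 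Moreover $|Q_T|\ge t-1$, and $\omega_T(C)\ge 2|Q_T|$ together with $\omega_T(C)<2t$ leaves no room unless $|Q_T|\le t-1$. So $Q_T$ is all of $T$ except $i_0$, and $\mu(w)\ge r-1$.
\end{itemize}
Thus the only way to fail is the highly structured situation where almost all $R$-graphs equal $F_w$ and almost all $T$-graphs miss both edges of $C$ at $w$.

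The main obstacle is this residual configuration. Here I will not take $C$ blindly but re-choose it. If $\ell<n$, I will look for an $\ell$-cycle of $G$ avoiding $w$; this exists because $G-w\supseteq K_{n-1}$ when $r\ge1$, and when $r=0$ the $|S|=2$ case is already contradictory. I will then rerun the Hall argument on this cycle. Now $Q_R=\emptyset$ for $|S|=2$, and $|S|=1$ is impossible because each edge avoiding $w$ lies in at least $r-1+1$ graphs. I also still need the weight bound, so I intend to apply the averaging of Lemma \ref{lem4-1} inside $G-w$ (via Lemma \ref{lem2-1} on $n-1$ vertices), or else to swap $w$ out of $C$ by an Operation II move, using the $n-\ell$ vertices outside $C$. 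If $\ell=n$, every Hamiltonian cycle passes through $w$, and its two $w$-edges must come from colours in $T$ together with the at most one $R$-colour not equal to $F_w$. I will show this succeeds unless $\mu(w)=n-1$, i.e. unless $|\{i:G_i=F_w\}|=n-1$, which is alternative (ii).

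\textbf{Case $t=0$.} All $G_i=F_{v_i}$ and $G=\bigcup F_{v_i}$. I will handle this directly. If all $v_i$ coincide, $N_n=0$, and any $\ell<n$ cycle inside $K_{n-1}$ is rainbow. Otherwise I choose the $\ell$-cycle to avoid the vertex of maximum $\mu$ when $\ell<n$. When $\ell=n$, Hall fails only if $\mu(w)\ge n-1$ for some $w$, again giving (ii). The case $|S|=1$ is trivial throughout, since every edge of $G$ lies in some $G_i$. I expect the bookkeeping in the $|S|=2$ analysis, and ensuring the re-chosen cycle still has small $\omega_T$, to be the delicate part.
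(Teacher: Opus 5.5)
Your reduction to the residual configuration is correct and is essentially the paper's Claims 1 and 2. Your direct estimate $|S|(n-|S|+1)\ge 2(n-1)\ge 2t$ for $|S|\ge 3$ is a little cleaner than the paper's $F(x)$ argument, and your $t=0$ case matches Subcase 2.2. There are two slips. First, $2(n-1)\ge 2t$ fails at $t=n$. There Lemma \ref{lem4-1} only gives $\omega_T(C)<2n$, so neither your $|S|\ge 3$ nor your $|S|=2$ argument closes. You must use the sharper bound $\omega(C)<2(n-1)$ from Lemma \ref{lem2-2}, as the paper does. Second, for $1\le t\le n-1$ you get more than $\mu(w)\ge r-1$. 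Since $|Q|=n-1$ and $|Q_T|\le t-1$, you have $|Q_R|\ge r$. So the unique colour $k$ seeing $S=\{wu,wy\}$ lies in $T$, and \emph{every} $R$-graph equals $F_w$.

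The genuine gap is the step you only announce for $\ell=n$: ``I will show this succeeds unless $\mu(w)=n-1$.'' That is the whole difficulty of the lemma when $t\ge 2$. A Hamiltonian cycle must take its two $w$-edges from two distinct $T$-colours and then match its other $n-2$ edges to the remaining $n-2$ colours. Your proposal contains no construction or counting that achieves this.

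The missing idea is to reuse the cycle $C$ you already have:
\begin{enumerate}
\item Since $\omega_T(wu)=\omega_T(wy)=t-1$ and $\omega_T(C)<2t$, the path $P=C-w$ has $\omega_T(P)\le 1$.
\item Take $p\in T\setminus\{k\}$. Then $G_p$ misses $wu$ and $wy$ but has at least $\binom{n-1}{2}+1$ edges, so $w$ has a $G_p$-neighbour $z\notin\{u,y\}$.
\item Change $P$ in exactly one edge into a $u$--$z$ path $P'$ on $\ell-1$ vertices. The new edge avoids $w$, so it lies in $F_w$.
\item Colour $wu$ by $k$, $wz$ by $p$, and the new edge by some $q\in R$.
\item Match the $\ell-3$ common edges of $P$ and $P'$ to $[n]\setminus\{k,p,q\}$. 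Because $\omega_T(P)\le 1$ and the $R$-colours contain every edge avoiding $w$, this bipartite graph is $K_{\ell-3,n-3}$ minus at most one edge.
\end{enumerate}
Hall's condition then holds except when $n=\ell=4$, which is checked by hand.

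This rerouting handles every $\ell$ at once. Your separate route for $\ell<n$ (averaging over $\ell$-cycles of $G-w\cong K_{n-1}$ via Lemma \ref{lem2-1}) can be completed. The $T$-colours other than $k$ miss both $wu$ and $wy$, so the average $\omega_T$-weight of an $\ell$-cycle in $G-w$ is below $2(n-1)$, while any Hall violator there has $Q\subseteq T$ and forces weight at least $2(n-1)$. As written, though, that part is also only a sketch offering two alternative routes with neither verified.
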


\begin{proof}
For any cycle $C$ in $G$, similar to the proof of Lemma \ref{lem2-2}, we construct a bipartite graph $B_{C}$ with bipartition $(E(C),[n])$, where $e\in E(C)$ is adjacent to $i\in[n]$ if and only if $e\in E(G_i)$. Since $E(C)\subseteq E(G)$, we have $d_{B_{C}}(e)\ge 1$ for any $e\in E(C)$.

\vspace{6pt}
$\mathbf{Case~1.}$ There exists some $C\in\mathcal{C}_\ell(G)$ such that $B_{C}$ contains a matching covering all vertices in $E(C)$.
\vspace{6pt}

Since a matching in $B_{C}$ which covers all vertices in $E(C)$ corresponds to a rainbow coloring of $C$, it follows that $\mathbf{G}$ contains a rainbow $\ell$-cycle, and thus (i) holds.

\vspace{6pt}
$\mathbf{Case~2.}$ There is no such matching for any $C\in\mathcal{C}_\ell(G)$.
\vspace{6pt}

By Theorem \ref{thm2-1}, for any $C\in\mathcal{C}_\ell(G)$, there is a nonempty set $S\subseteq E(C)$ such that $|N_{B_{C}}(S)|\le |S|-1\le \ell-1\le n-1$. Since $|N_{B_{C}}(S)|>0$, we have $|S|\ge 2$. Let $Q=[n]\setminus N_{B_{C}}(S)$. Then $|Q|\ge n-|S|+1$.

Recall the sets $R$ and $T$ defined above, where $0\le |T|=t\le n$.
Now we consider the following three subcases according to the size of $t$.

\vspace{6pt}
$\mathbf{Subcase~2.1.}$ $t=n$.
\vspace{6pt}

It is clear that $e(\overline{G_i})\le n-2$ for each $i\in [n]$. Since $N_\ell>0$, $G$ contains an $\ell$-cycle, and it follows from the necessity proof of Lemma \ref{lem2-2} that $\mathbf{G}$ contains a rainbow $\ell$-cycle. Thus (i) holds.

\vspace{6pt}
$\mathbf{Subcase~2.2.}$ $t=0$.
\vspace{6pt}

It immediately follows that $r=n$ and $G_i=F_{v_i}$ for some $v_i\in V$ and any $i\in[n]$. Choose a cycle $C\in\mathcal{C}_\ell(G)$, and let $S$ and $Q$ be defined as above.
Now we show $|S|=2$.

If $|S|\ge 3$, then $|E(G_i)\cap S|\ge 1$ for any $i\in[n]$ since $G_i=F_{v_i}$ for some $v_i\in V$ and $S\subseteq E(C)$. This implies $|N_{B_{C}}(S)|=n$, a contradiction. Thus $|S|\le 2$, and so $|S|=2$.

Since $|N_{B_{C}}(S)|>0$ and $|N_{B_{C}}(S)|\le |S|-1=1$, we have $|N_{B_{C}}(S)|=1$, and thus $|Q|=n-1$, which implies there are $n-1$ graphs in $\mathbf{G}$ missing the two edges of $S$ and $G_k$ contains the two edges of $S$, where $\{k\}=N_{B_{C}}(S)$. Since $G_i=F_{v_i}$ for some $v_i\in V$ for any $i\in[n]$, we can obtain that there exists a vertex $v\in V$ such that $|\{i\in[n]:G_i=F_v\}|=n-1$. Without loss of generality, we take $G_1=G_2=\dots=G_{n-1}=F_v$.

If $\ell=n$, then (ii) holds. If $3\le\ell\le n-1$, then there exists a rainbow $\ell$-cycle in $\{G_1, \dots, G_{n-1}\}$, and thus (i) holds.

\vspace{6pt}
$\mathbf{Subcase~2.3.}$ $1\le t\le n-1$.
\vspace{6pt}

Since $r+t=n$, we have $1\le r\le n-1$.
By Lemma \ref{lem4-1}, there exists a cycle $C\in\mathcal{C}_\ell(G)$ such that $\omega_T(C)<2t$. Let $S$ and $Q$ be defined as above. We can obtain the following claims by the property of $C$.

\begin{claimnum}\label{claim1}
$Q\cap R\neq\emptyset$.
\end{claimnum}

\begin{proof}
If not, then $Q\subseteq T$. Thus $|Q|\le t$, and $e(\overline{G_i})\le n-2$ for each $i\in Q$ by the definition of $T$. Since $N_{B_{C}}(S) \cap Q=\emptyset$, we have $e(\overline{G_i})\ge |S|$ for each $i\in Q$. It follows that $2\le |S|\le n-2$, which implies $n\ge 4$ and $3\le n-|S|+1\le |Q|\le t$.

By the definition of $\omega_T(C)$ and the fact that $Q\subseteq T$, we have

\[\omega_T(C)=\sum\limits_{e\in E(C)}|\{i\in T:e\notin E(G_i)\}|\ge |S||Q|\ge |S|(n-|S|+1).\]
Let $F(x)=x(n-x+1)$, where $3\le x\le t$. Then $F(3)=3(n-2)\ge 2(n-1)\ge 2t$, and $F(t)=t(n-t+1)=t(r+1)\ge 2t$ as $r+t=n$. Thus $F(x)\ge \min\{F(3),F(t)\}\geq 2t$ for $3\le x\le t$. Therefore, $\omega_T(C)\ge |S|(n-|S|+1)=F(n-|S|+1)\ge 2t$ since $3\le n-|S|+1\le t$. This contradicts $\omega_T(C)<2t$.
\end{proof}

By Claim \ref{claim1}, we take $j\in Q\cap R$. Then $G_j=F_{v_j}$ for some $v_j\in V$, and thus $|E(C)\setminus E(G_j)|=\begin{cases}
0, & \text{if } v_j\notin V(C);\\
2, & \text{if } v_j\in V(C).
\end{cases}$ Combining this with $j\in Q$, we have $S\subseteq E(C)\cap E(\overline{G_j})$, then $|S|=2$ as $|S|\ge2$, and thus $|N_{B_{C}}(S)|=1$.
Without loss of generality, we take $S=\{v_ju,v_jw\}$ and $N_{B_{C}}(S)=\{k\}$. Then $S\subseteq E(G_k)$.

\begin{claimnum}\label{claim2}
For each $i\in R$, $G_i=G_j=F_{v_j}$.
\end{claimnum}

\begin{proof}
Assume for contradiction that there exists some $i\in R$ such that $G_{i}=F_{v_{i}}$, where $v_{i}\neq v_j$.

Now we consider $v_i$ and show $i=k$. In fact, if $v_{i}=u$, then $v_jw\in E(G_{i})$; if $v_{i}=w$, then $v_ju\in E(G_{i})$; if $v_{i}\notin\{u,w\}$, then $\{v_ju,v_jw\}\subseteq E(G_{i})$. Thus $i\in N_{B_{C}}(S)$, which implies $i=k$. It follows that $T\subseteq Q$. Therefore, $\omega_T(C)=\sum\limits_{e\in E(C)}|\{i\in T:e\notin E(G_i)\}|\ge |S||T|=2t$. This contradicts $\omega_T(C)<2t$.
\end{proof}

Since $S\cap E(G_j)=\emptyset$ and $S\subseteq E(G_k)$, by Claim \ref{claim2}, we have $k\in T$. Thus $\omega_T(v_ju)=\omega_T(v_jw)=t-1$. Let $P=C-\{v_j\}=x_1x_2\cdots x_{\ell-1}$, where $x_1=u$ and $x_{\ell-1}=w$. Then
\begin{equation}\label{e4}
\omega_T(P)=\omega_T(C)-\omega_T(v_ju)-\omega_T(v_jw)<2t-2(t-1)=2.
\end{equation}

$\mathbf{Subcase~2.3.1.}$ $t=1$.

In this subcase, $r=n-1$. By Claim \ref{claim2}, $|\{i\in[n]:G_i=F_{v_j}\}|=n-1$. If $\ell=n$, then (ii) holds. If $3\le\ell\le n-1$, then there exists a rainbow $\ell$-cycle in $\mathbf{G}\setminus\{G_k\}$, and thus (i) holds.

$\mathbf{Subcase~2.3.2.}$ $2\le t\le n-1$.

Let $p\in T\setminus\{k\}$. Then $p\in Q\cap T$ since $N_{B_{C}}(S)=\{k\}$, which implies $S\cap E(G_p)=\emptyset$ and $e(G_p)\ge\binom{n-1}{2}+1$, and thus $u,w\notin N_{G_p}(v_j)$ and $d_{G_p}(v_j)\ge 1$. Therefore, $n\ge 4$, and we take $z\in N_{G_p}(v_j)$.

Now we construct a path $P'$ of order $\ell-1$ in $G$ from $x_1(=u)$ to $z$ such that $P'$ differs from $P$ in exactly one edge.
If $z=x_a\in V(P)$, then $x_a\notin\{u,w\}$ as $z=x_a\in N_{G_p}(v_j)$, which implies $2\le a\le \ell-2$, and thus we take $P'=x_1x_2\cdots x_{a-1}x_{\ell-1}x_{\ell-2}\cdots x_a$ since $x_{a-1}x_{\ell-1}\in E(G_i)$ for each $i\in R$ by Claim \ref{claim2}.
If $z\notin V(P)$, then we take $P'=x_1x_2\cdots x_{\ell-2}z$ since $x_{\ell-2}z\in E(G_i)$ for each $i\in R$ by Claim \ref{claim2}. Thus $C'=v_jx_1P'zv_j\in\mathcal{C}_\ell(G)$. Now we show that $C'$ is a rainbow $\ell$-cycle.

Let $e'$ be the unique edge in $E(P')\setminus E(P)$ and $q\in R$. Then $e'\in E(G_q)$ by Claim \ref{claim2}. Note that $v_jx_1\in E(G_k)$ and $v_jz\in E(G_p)$, where $k,p\in T$.
By the construction of $P',C'$, we see that $E(C')\setminus\{v_jx_1,v_jz,e'\}=E(P)\cap E(P')$.

If $\ell=3$, then $E(P)\cap E(P')=\emptyset$, $C'=v_jx_1zv_j$ and $e'=x_1z$. It follows that $C'$ is a rainbow 3-cycle with $v_jx_1\in E(G_k)$, $x_1z\in E(G_q)$ and $v_jz\in E(G_{p})$, and thus (i) holds.

If $\ell\ge4$, we construct a bipartite graph $B'$ with bipartition $(E(P)\cap E(P'),[n]\setminus\{k,p,q\})$, where $e\in E(P)\cap E(P')$ is adjacent to $i\in[n]\setminus\{k,p,q\}$ if and only if $e\in E(G_i)$. By Claim \ref{claim2}, we have $ei\in E(B')$ for any $e\in E(P)\cap E(P')$ and any $i\in R\cap ([n]\setminus\{k,p,q\})$. Combining this with \eqref{e4}, it follows that $B'$ is a graph obtained from $K_{\ell-3,n-3}$ by deleting at most one edge.

By Observation \ref{ob4-1}, $B'$ has a matching covering all vertices in $E(P)\cap E(P')$, except when $\ell-3=n-3=1$ and the only edge is removed. If $B'$ contains such a matching, then $C'$ is a rainbow $\ell$-cycle, and thus (i) holds.

In the following, we consider $\ell-3=n-3=1$, i.e., $n=\ell=4$. Thus $P=uzw$, $P'=uwz$, $C'=v_juwzv_j$ and $e'=uw$. From earlier arguments, we know $v_ju\in E(G_k)$, $v_jz\in E(G_p)$ and $uw\in E(G_q)$, where $k,p\in T$ and $q\in R$. Let $\{p'\}=[4]\setminus\{k,p,q\}$. Then $p'\in Q$ and thus $\{v_ju,v_jw\}\cap E(G_{p'})=\emptyset$.

If $p'\in R$, then $wz\in E(G_{p'})$. If $p'\in T$, then $e(\overline{G_{p'}})\le n-2$, and thus $wz\in E(G_{p'})$ since $\{v_ju,v_jw\}\cap E(G_{p'})=\emptyset$. Therefore, $C'$ is a rainbow 4-cycle with $v_ju\in E(G_k)$, $uw\in E(G_q)$, $wz\in E(G_{p'})$ and $zv_j\in E(G_p)$, and thus (i) holds.

Combining the above arguments, either (i) or (ii) holds, and thus the proof is completed.
\end{proof}

Now we give the proof of Theorem \ref{thm1-9}.
\begin{proof}[\noindent\textbf{Proof of Theorem \ref{thm1-9}}]
We first prove the sufficiency.
If (i) or (ii) holds, then $v$ is not contained in any rainbow cycle, and thus $\mathbf{G}$ contains no rainbow $n$-cycle. If (iii) holds, then $\mathbf{G}$ contains no rainbow $5$-cycle. If (iv) holds, then $\mathbf{G}$ contains no rainbow $3$-cycle. Thus $\mathbf{G}$ is not rainbow pancyclic.

Next, we prove the necessity. Assume that $\mathbf{G}$ is not rainbow pancyclic. Since $\rho(G_i)\ge n-2$ for each $i\in [n]$, by Theorem \ref{thm2-2}, we have $e(G_i)\ge \binom{n-1}{2}$ for each $i\in [n]$, and thus $e(G)\ge \binom{n-1}{2}$. We distinguish two cases according to $e(G)$.

\vspace{6pt}
$\mathbf{Case~1.}$ $e(G)=\binom{n-1}{2}$.
\vspace{6pt}

Since $G_i\subseteq G$ and $e(G_i)\ge \binom{n-1}{2}=e(G)$ for each $i\in [n]$, then $G_1=G_2=\dots=G_{n}=G$. By Corollary \ref{cor1}, $G=F_v\cong K_{n-1}\cup K_1$ for some $v\in V$, which implies $|\{i\in[n]:G_i=F_v\}|=n$. Thus both (i) and (ii) hold.

\vspace{6pt}
$\mathbf{Case~2.}$ $e(G)\ge \binom{n-1}{2}+1$.
\vspace{6pt}

By (i) of Theorem \ref{thm1-4}, $G$ is pancyclic unless $G\in\{K_1\vee(K_{n-2}\cup K_1),K_2\vee 3K_1,K_{2,2}\}$.

If $G$ is pancyclic, then by Lemma \ref{lem4-2}, either $\mathbf{G}$ contains a rainbow $\ell$-cycle for each $3\le\ell\le n$, or there exists a vertex $v\in V$ such that $|\{i\in[n]:G_i=F_v\}|=n-1$. Since $\mathbf{G}$ is not rainbow pancyclic, (i) holds.

If $G\cong K_1\vee(K_{n-2}\cup K_1)$, then there exist distinct $u,v\in V$ such that $G_i\subseteq G=A_{v,u}$ for each $i\in [n]$. For $n=3$, we have $G_i\in\{F_v,F_w,A_{v,u}\}$ for each $i\in [3]$, where $V=\{u,v,w\}$.
For $n\ge 4$, if $G_i\neq A_{v,u}$ for some $i\in [n]$, then $e(G_i)<e(A_{v,u})=\binom{n-1}{2}+1$. Since $\rho(G_i)\ge n-2$, by Corollary \ref{cor1}, we have $G_i=F_{v_i}$ for some $v_i\in V$. It follows that $G_i=F_{v}$ since $G_i\subseteq A_{v,u}$ and $n\ge 4$. Therefore, $G_i\in\{F_v,A_{v,u}\}$ for $n\ge 4$ and each $i\in [n]$. Thus (ii) holds.

If $G\cong K_2\vee 3K_1$, then $n=5$ and $\rho(G)=3=n-2$. If $G_i\subsetneq G$ for some $i\in [5]$, then $\rho(G_i)<\rho(G)=n-2$, a contradiction. Thus $G_i=G$ for each $i\in [5]$, and so (iii) holds.

If $G\cong K_{2,2}$, then $n=4$ and $\rho(G)=2=n-2$. Similarly, $G_i=G$ for each $i\in [4]$, and thus (iv) holds.

Therefore, the proof of Theorem \ref{thm1-9} is completed.
\end{proof}

\section{Conclusion}
In this paper, we give edge and spectral conditions to guarantee that a graph collection is rainbow pancyclic. Our results generalize the classical results for graphs to graph collections and support Conjecture \ref{con1-1} in the context of graph collections.

In recent years, there has been extensive research on spectral conditions that guarantee the existence of rainbow subgraphs in graph collections, see, \cite{Chen,Chen2026,Guo,Guo2026,He,He2026,Zhang2026} and so on. It is worth noting that almost all existing studies concerning spectral conditions in graph collections rely on a shifting technique, i.e. the Kelmans operation. In this paper, we adopt a new approach and give our proof from a structural perspective.

\section*{Funding}
This work is supported by the National Natural Science Foundation of China (Grant Nos. 12371347, 12271337).

\section*{Declarations}
\noindent\textbf{Conflict of interest} The authors declare that they have no known competing financial interests or personal relationships that could have appeared to influence the work reported in this paper.\\
\textbf{Data availability} No data was used for the research described in the article.

\end{spacing}
\end{document}